\DeclareFontFamily{U}  {MnSymbolC}{}
\DeclareSymbolFont{MnSyC}         {U}  {MnSymbolC}{m}{n}
\DeclareFontShape{U}{MnSymbolC}{m}{n}{
    <-6>  MnSymbolC5
   <6-7>  MnSymbolC6
   <7-8>  MnSymbolC7
   <8-9>  MnSymbolC8
   <9-10> MnSymbolC9
  <10-12> MnSymbolC10
  <12->   MnSymbolC12}{}
\DeclareSymbolFont{MnSyC} {U} {MnSymbolC}{m}{n}
\let\hdots\undefined
\DeclareMathSymbol{\hdots}{\mathord}{MnSyC}{5}
\newcommand{\Gm}{{\mathbb{G}_m}}
\newcommand{\CP}{\mathbb{CP}}
\newcommand{\Z}{\mathbb{Z}}
\newcommand{\C}{\mathbb{C}}
\newcommand{\Q}{\mathbb{Q}}
\newcommand{\E}{\mathbb{E}}
\newcommand{\h}{\mathfrak{h}}
\newcommand{\T}{\mathbb{T}}
\newcommand{\KU}{\mathrm{KU}}
\newcommand{\MU}{\mathrm{MU}}
\newcommand{\BU}{\mathrm{BU}}
\newcommand{\bu}{\mathrm{bu}}
\newcommand{\Tate}{\mathrm{Tate}}
\renewcommand{\phi}{\varphi}
\newcommand{\Td}{\mathrm{Td}}
\newcommand{\Wit}{\mathrm{Wit}}
\newcommand{\SL}{\mathrm{SL}}
\newcommand{\co}{\colon\thinspace}
\renewcommand{\th}{\textsuperscript{th}}
\newcommand{\<}{\langle}
\renewcommand{\>}{\rangle}
\renewcommand{\#}{\sharp}
\newcommand{\ls}[1]{(\!({#1})\!)}
\newcommand{\ps}[1]{\llbracket{#1}\rrbracket}
\theoremstyle{plain}
\newtheorem{theorem}{Theorem}[section]
\newtheorem{maintheorem}{Main Theorem}
\newtheorem{lemma}[theorem]{Lemma}
\theoremstyle{definition}
\newtheorem{definition}[theorem]{Definition}
\theoremstyle{remark}
\newtheorem{remark}[theorem]{Remark}
\newtheorem{example}[theorem]{Example}
\title{$p$--adic congruences in iterated derivatives of $\wp$}
\date{}
\author{Kiran Luecke, Eric Peterson}
\begin{document}

\maketitle

\vspace{-1.5\baselineskip}
\begin{abstract}
    We use homotopy theoretic methods to prove congruence relations of number theoretic interest.
    Specifically, we use the theory of $\E_\infty$ complex orientations to establish $p$--adic K\"ummer congruences among iterated derivatives of the Weierstrass elliptic function.
    
    The machinery of \cite{AHR} was developed by Ando, Hopkins, and Rezk with the intended application of taking congruence relations as input and producing $\E_\infty$--orientations as output.
    We run their machine in reverse, using as input the recent results of \cite{CarmeliLuecke} on the existence of $\E_\infty$--orientations of Tate fixed-point objects.
\end{abstract}

\begin{center}
\resizebox{0.80\hsize}{!}{$\begin{array}{r|cccccc}
& t^0 & t^{-1} & t^{-2} & t^{-3} & t^{-4} & t^{-5} \\
\hline
M_{1}^{\Wit^\#} & \hdots {\color[rgb]{0.451848, 0., 0.968246}0}{\color[rgb]{0., 0.935414, 0.904234}0}{\color[rgb]{0.173205, 0.866025, 0.}0}{\color[rgb]{0.707107, 0.683537, 0.}0}{\color[rgb]{0.707107, 0.329983, 0.}0}{\color[rgb]{0.707107, 0.212132, 0.}0}_{2} & \hdots {\color[rgb]{0.451848, 0., 0.968246}1}{\color[rgb]{0., 0.935414, 0.904234}1}{\color[rgb]{0.173205, 0.866025, 0.}1}{\color[rgb]{0.707107, 0.683537, 0.}1}{\color[rgb]{0.707107, 0.329983, 0.}1}{\color[rgb]{0.707107, 0.212132, 0.}0}_{2} & \hdots {\color[rgb]{0.451848, 0., 0.968246}1}{\color[rgb]{0., 0.935414, 0.904234}1}{\color[rgb]{0.173205, 0.866025, 0.}1}{\color[rgb]{0.707107, 0.683537, 0.}1}{\color[rgb]{0.707107, 0.329983, 0.}1}{\color[rgb]{0.707107, 0.212132, 0.}0}_{2} & \hdots {\color[rgb]{0.451848, 0., 0.968246}1}{\color[rgb]{0., 0.935414, 0.904234}1}{\color[rgb]{0.173205, 0.866025, 0.}1}{\color[rgb]{0.707107, 0.683537, 0.}1}{\color[rgb]{0.707107, 0.329983, 0.}0}{\color[rgb]{0.707107, 0.212132, 0.}0}_{2} & \hdots {\color[rgb]{0.451848, 0., 0.968246}1}{\color[rgb]{0., 0.935414, 0.904234}1}{\color[rgb]{0.173205, 0.866025, 0.}1}{\color[rgb]{0.707107, 0.683537, 0.}0}{\color[rgb]{0.707107, 0.329983, 0.}0}{\color[rgb]{0.707107, 0.212132, 0.}0}_{2} & \hdots {\color[rgb]{0.451848, 0., 0.968246}1}{\color[rgb]{0., 0.935414, 0.904234}1}{\color[rgb]{0.173205, 0.866025, 0.}0}{\color[rgb]{0.707107, 0.683537, 0.}0}{\color[rgb]{0.707107, 0.329983, 0.}0}{\color[rgb]{0.707107, 0.212132, 0.}0}_{2} \\
M_{2}^{\Wit^\#} & \hdots {\color[rgb]{0.436527, 0., 0.935414}1}{\color[rgb]{0., 0.866025, 0.837158}0}{\color[rgb]{0.141421, 0.707107, 0.}1}{\color[rgb]{0., 0., 0.}0}{\color[rgb]{0., 0., 0.}1}{\color[rgb]{0., 0., 0.}0}_{2} & \hdots {\color[rgb]{0.436527, 0., 0.935414}1}{\color[rgb]{0., 0.866025, 0.837158}1}{\color[rgb]{0.141421, 0.707107, 0.}1}{\color[rgb]{0., 0., 0.}0}{\color[rgb]{0., 0., 0.}0}{\color[rgb]{0., 0., 0.}0}_{2} & \hdots {\color[rgb]{0.436527, 0., 0.935414}1}{\color[rgb]{0., 0.866025, 0.837158}1}{\color[rgb]{0.141421, 0.707107, 0.}1}{\color[rgb]{0., 0., 0.}0}{\color[rgb]{0., 0., 0.}0}{\color[rgb]{0., 0., 0.}0}_{2} & \hdots {\color[rgb]{0.436527, 0., 0.935414}1}{\color[rgb]{0., 0.866025, 0.837158}0}{\color[rgb]{0.141421, 0.707107, 0.}0}{\color[rgb]{0., 0., 0.}0}{\color[rgb]{0., 0., 0.}0}{\color[rgb]{0., 0., 0.}0}_{2} & \hdots {\color[rgb]{0.436527, 0., 0.935414}1}{\color[rgb]{0., 0.866025, 0.837158}1}{\color[rgb]{0.141421, 0.707107, 0.}0}{\color[rgb]{0., 0., 0.}0}{\color[rgb]{0., 0., 0.}0}{\color[rgb]{0., 0., 0.}0}_{2} & \hdots {\color[rgb]{0.436527, 0., 0.935414}0}{\color[rgb]{0., 0.866025, 0.837158}0}{\color[rgb]{0.141421, 0.707107, 0.}0}{\color[rgb]{0., 0., 0.}0}{\color[rgb]{0., 0., 0.}0}{\color[rgb]{0., 0., 0.}0}_{2} \\
M_{3}^{\Wit^\#} & \hdots {\color[rgb]{0.420648, 0., 0.901388}0}{\color[rgb]{0., 0.790569, 0.764217}0}{\color[rgb]{0.1, 0.5, 0.}0}{\color[rgb]{0.707107, 0.683537, 0.}0}{\color[rgb]{0.707107, 0.329983, 0.}0}{\color[rgb]{0.707107, 0.212132, 0.}0}_{2} & \hdots {\color[rgb]{0.420648, 0., 0.901388}1}{\color[rgb]{0., 0.790569, 0.764217}0}{\color[rgb]{0.1, 0.5, 0.}0}{\color[rgb]{0.707107, 0.683537, 0.}1}{\color[rgb]{0.707107, 0.329983, 0.}1}{\color[rgb]{0.707107, 0.212132, 0.}0}_{2} & \hdots {\color[rgb]{0.420648, 0., 0.901388}1}{\color[rgb]{0., 0.790569, 0.764217}0}{\color[rgb]{0.1, 0.5, 0.}0}{\color[rgb]{0.707107, 0.683537, 0.}1}{\color[rgb]{0.707107, 0.329983, 0.}1}{\color[rgb]{0.707107, 0.212132, 0.}0}_{2} & \hdots {\color[rgb]{0.420648, 0., 0.901388}1}{\color[rgb]{0., 0.790569, 0.764217}1}{\color[rgb]{0.1, 0.5, 0.}1}{\color[rgb]{0.707107, 0.683537, 0.}1}{\color[rgb]{0.707107, 0.329983, 0.}0}{\color[rgb]{0.707107, 0.212132, 0.}0}_{2} & \hdots {\color[rgb]{0.420648, 0., 0.901388}1}{\color[rgb]{0., 0.790569, 0.764217}0}{\color[rgb]{0.1, 0.5, 0.}1}{\color[rgb]{0.707107, 0.683537, 0.}0}{\color[rgb]{0.707107, 0.329983, 0.}0}{\color[rgb]{0.707107, 0.212132, 0.}0}_{2} & \hdots {\color[rgb]{0.420648, 0., 0.901388}1}{\color[rgb]{0., 0.790569, 0.764217}0}{\color[rgb]{0.1, 0.5, 0.}0}{\color[rgb]{0.707107, 0.683537, 0.}0}{\color[rgb]{0.707107, 0.329983, 0.}0}{\color[rgb]{0.707107, 0.212132, 0.}0}_{2} \\
M_{4}^{\Wit^\#} & \hdots {\color[rgb]{0.404145, 0., 0.866025}0}{\color[rgb]{0., 0.707107, 0.683537}1}{\color[rgb]{0., 0., 0.}1}{\color[rgb]{0., 0., 0.}0}{\color[rgb]{0., 0., 0.}1}{\color[rgb]{0., 0., 0.}0}_{2} & \hdots {\color[rgb]{0.404145, 0., 0.866025}1}{\color[rgb]{0., 0.707107, 0.683537}1}{\color[rgb]{0., 0., 0.}0}{\color[rgb]{0., 0., 0.}0}{\color[rgb]{0., 0., 0.}0}{\color[rgb]{0., 0., 0.}0}_{2} & \hdots {\color[rgb]{0.404145, 0., 0.866025}1}{\color[rgb]{0., 0.707107, 0.683537}1}{\color[rgb]{0., 0., 0.}0}{\color[rgb]{0., 0., 0.}0}{\color[rgb]{0., 0., 0.}0}{\color[rgb]{0., 0., 0.}0}_{2} & \hdots {\color[rgb]{0.404145, 0., 0.866025}0}{\color[rgb]{0., 0.707107, 0.683537}0}{\color[rgb]{0., 0., 0.}0}{\color[rgb]{0., 0., 0.}0}{\color[rgb]{0., 0., 0.}0}{\color[rgb]{0., 0., 0.}0}_{2} & \hdots {\color[rgb]{0.404145, 0., 0.866025}1}{\color[rgb]{0., 0.707107, 0.683537}0}{\color[rgb]{0., 0., 0.}0}{\color[rgb]{0., 0., 0.}0}{\color[rgb]{0., 0., 0.}0}{\color[rgb]{0., 0., 0.}0}_{2} & \hdots {\color[rgb]{0.404145, 0., 0.866025}0}{\color[rgb]{0., 0.707107, 0.683537}0}{\color[rgb]{0., 0., 0.}0}{\color[rgb]{0., 0., 0.}0}{\color[rgb]{0., 0., 0.}0}{\color[rgb]{0., 0., 0.}0}_{2} \\
M_{5}^{\Wit^\#} & \hdots {\color[rgb]{0.38694, 0., 0.829156}0}{\color[rgb]{0., 0.612372, 0.59196}0}{\color[rgb]{0.173205, 0.866025, 0.}0}{\color[rgb]{0.707107, 0.683537, 0.}0}{\color[rgb]{0.707107, 0.329983, 0.}0}{\color[rgb]{0.707107, 0.212132, 0.}0}_{2} & \hdots {\color[rgb]{0.38694, 0., 0.829156}0}{\color[rgb]{0., 0.612372, 0.59196}0}{\color[rgb]{0.173205, 0.866025, 0.}1}{\color[rgb]{0.707107, 0.683537, 0.}1}{\color[rgb]{0.707107, 0.329983, 0.}1}{\color[rgb]{0.707107, 0.212132, 0.}0}_{2} & \hdots {\color[rgb]{0.38694, 0., 0.829156}0}{\color[rgb]{0., 0.612372, 0.59196}0}{\color[rgb]{0.173205, 0.866025, 0.}1}{\color[rgb]{0.707107, 0.683537, 0.}1}{\color[rgb]{0.707107, 0.329983, 0.}1}{\color[rgb]{0.707107, 0.212132, 0.}0}_{2} & \hdots {\color[rgb]{0.38694, 0., 0.829156}1}{\color[rgb]{0., 0.612372, 0.59196}1}{\color[rgb]{0.173205, 0.866025, 0.}1}{\color[rgb]{0.707107, 0.683537, 0.}1}{\color[rgb]{0.707107, 0.329983, 0.}0}{\color[rgb]{0.707107, 0.212132, 0.}0}_{2} & \hdots {\color[rgb]{0.38694, 0., 0.829156}0}{\color[rgb]{0., 0.612372, 0.59196}1}{\color[rgb]{0.173205, 0.866025, 0.}1}{\color[rgb]{0.707107, 0.683537, 0.}0}{\color[rgb]{0.707107, 0.329983, 0.}0}{\color[rgb]{0.707107, 0.212132, 0.}0}_{2} & \hdots {\color[rgb]{0.38694, 0., 0.829156}0}{\color[rgb]{0., 0.612372, 0.59196}1}{\color[rgb]{0.173205, 0.866025, 0.}0}{\color[rgb]{0.707107, 0.683537, 0.}0}{\color[rgb]{0.707107, 0.329983, 0.}0}{\color[rgb]{0.707107, 0.212132, 0.}0}_{2} \\
M_{6}^{\Wit^\#} & \hdots {\color[rgb]{0.368932, 0., 0.790569}0}{\color[rgb]{0., 0.5, 0.483333}0}{\color[rgb]{0.141421, 0.707107, 0.}1}{\color[rgb]{0., 0., 0.}0}{\color[rgb]{0., 0., 0.}1}{\color[rgb]{0., 0., 0.}0}_{2} & \hdots {\color[rgb]{0.368932, 0., 0.790569}1}{\color[rgb]{0., 0.5, 0.483333}0}{\color[rgb]{0.141421, 0.707107, 0.}1}{\color[rgb]{0., 0., 0.}0}{\color[rgb]{0., 0., 0.}0}{\color[rgb]{0., 0., 0.}0}_{2} & \hdots {\color[rgb]{0.368932, 0., 0.790569}1}{\color[rgb]{0., 0.5, 0.483333}0}{\color[rgb]{0.141421, 0.707107, 0.}1}{\color[rgb]{0., 0., 0.}0}{\color[rgb]{0., 0., 0.}0}{\color[rgb]{0., 0., 0.}0}_{2} & \hdots {\color[rgb]{0.368932, 0., 0.790569}1}{\color[rgb]{0., 0.5, 0.483333}0}{\color[rgb]{0.141421, 0.707107, 0.}0}{\color[rgb]{0., 0., 0.}0}{\color[rgb]{0., 0., 0.}0}{\color[rgb]{0., 0., 0.}0}_{2} & \hdots {\color[rgb]{0.368932, 0., 0.790569}0}{\color[rgb]{0., 0.5, 0.483333}1}{\color[rgb]{0.141421, 0.707107, 0.}0}{\color[rgb]{0., 0., 0.}0}{\color[rgb]{0., 0., 0.}0}{\color[rgb]{0., 0., 0.}0}_{2} & \hdots {\color[rgb]{0.368932, 0., 0.790569}0}{\color[rgb]{0., 0.5, 0.483333}0}{\color[rgb]{0.141421, 0.707107, 0.}0}{\color[rgb]{0., 0., 0.}0}{\color[rgb]{0., 0., 0.}0}{\color[rgb]{0., 0., 0.}0}_{2} \\
M_{7}^{\Wit^\#} & \hdots {\color[rgb]{0.35, 0., 0.75}0}{\color[rgb]{0., 0.353553, 0.341768}0}{\color[rgb]{0.1, 0.5, 0.}0}{\color[rgb]{0.707107, 0.683537, 0.}0}{\color[rgb]{0.707107, 0.329983, 0.}0}{\color[rgb]{0.707107, 0.212132, 0.}0}_{2} & \hdots {\color[rgb]{0.35, 0., 0.75}1}{\color[rgb]{0., 0.353553, 0.341768}1}{\color[rgb]{0.1, 0.5, 0.}0}{\color[rgb]{0.707107, 0.683537, 0.}1}{\color[rgb]{0.707107, 0.329983, 0.}1}{\color[rgb]{0.707107, 0.212132, 0.}0}_{2} & \hdots {\color[rgb]{0.35, 0., 0.75}1}{\color[rgb]{0., 0.353553, 0.341768}1}{\color[rgb]{0.1, 0.5, 0.}0}{\color[rgb]{0.707107, 0.683537, 0.}1}{\color[rgb]{0.707107, 0.329983, 0.}1}{\color[rgb]{0.707107, 0.212132, 0.}0}_{2} & \hdots {\color[rgb]{0.35, 0., 0.75}1}{\color[rgb]{0., 0.353553, 0.341768}1}{\color[rgb]{0.1, 0.5, 0.}1}{\color[rgb]{0.707107, 0.683537, 0.}1}{\color[rgb]{0.707107, 0.329983, 0.}0}{\color[rgb]{0.707107, 0.212132, 0.}0}_{2} & \hdots {\color[rgb]{0.35, 0., 0.75}0}{\color[rgb]{0., 0.353553, 0.341768}0}{\color[rgb]{0.1, 0.5, 0.}1}{\color[rgb]{0.707107, 0.683537, 0.}0}{\color[rgb]{0.707107, 0.329983, 0.}0}{\color[rgb]{0.707107, 0.212132, 0.}0}_{2} & \hdots {\color[rgb]{0.35, 0., 0.75}0}{\color[rgb]{0., 0.353553, 0.341768}0}{\color[rgb]{0.1, 0.5, 0.}0}{\color[rgb]{0.707107, 0.683537, 0.}0}{\color[rgb]{0.707107, 0.329983, 0.}0}{\color[rgb]{0.707107, 0.212132, 0.}0}_{2} \\
M_{8}^{\Wit^\#} & \hdots {\color[rgb]{0.329983, 0., 0.707107}1}{\color[rgb]{0., 0., 0.}1}{\color[rgb]{0., 0., 0.}1}{\color[rgb]{0., 0., 0.}0}{\color[rgb]{0., 0., 0.}1}{\color[rgb]{0., 0., 0.}0}_{2} & \hdots {\color[rgb]{0.329983, 0., 0.707107}1}{\color[rgb]{0., 0., 0.}0}{\color[rgb]{0., 0., 0.}0}{\color[rgb]{0., 0., 0.}0}{\color[rgb]{0., 0., 0.}0}{\color[rgb]{0., 0., 0.}0}_{2} & \hdots {\color[rgb]{0.329983, 0., 0.707107}1}{\color[rgb]{0., 0., 0.}0}{\color[rgb]{0., 0., 0.}0}{\color[rgb]{0., 0., 0.}0}{\color[rgb]{0., 0., 0.}0}{\color[rgb]{0., 0., 0.}0}_{2} & \hdots {\color[rgb]{0.329983, 0., 0.707107}0}{\color[rgb]{0., 0., 0.}0}{\color[rgb]{0., 0., 0.}0}{\color[rgb]{0., 0., 0.}0}{\color[rgb]{0., 0., 0.}0}{\color[rgb]{0., 0., 0.}0}_{2} & \hdots {\color[rgb]{0.329983, 0., 0.707107}0}{\color[rgb]{0., 0., 0.}0}{\color[rgb]{0., 0., 0.}0}{\color[rgb]{0., 0., 0.}0}{\color[rgb]{0., 0., 0.}0}{\color[rgb]{0., 0., 0.}0}_{2} & \hdots {\color[rgb]{0.329983, 0., 0.707107}0}{\color[rgb]{0., 0., 0.}0}{\color[rgb]{0., 0., 0.}0}{\color[rgb]{0., 0., 0.}0}{\color[rgb]{0., 0., 0.}0}{\color[rgb]{0., 0., 0.}0}_{2} \\
M_{9}^{\Wit^\#} & \hdots {\color[rgb]{0.308671, 0., 0.661438}0}{\color[rgb]{0., 0.935414, 0.904234}0}{\color[rgb]{0.173205, 0.866025, 0.}0}{\color[rgb]{0.707107, 0.683537, 0.}0}{\color[rgb]{0.707107, 0.329983, 0.}0}{\color[rgb]{0.707107, 0.212132, 0.}0}_{2} & \hdots {\color[rgb]{0.308671, 0., 0.661438}0}{\color[rgb]{0., 0.935414, 0.904234}1}{\color[rgb]{0.173205, 0.866025, 0.}1}{\color[rgb]{0.707107, 0.683537, 0.}1}{\color[rgb]{0.707107, 0.329983, 0.}1}{\color[rgb]{0.707107, 0.212132, 0.}0}_{2} & \hdots {\color[rgb]{0.308671, 0., 0.661438}0}{\color[rgb]{0., 0.935414, 0.904234}1}{\color[rgb]{0.173205, 0.866025, 0.}1}{\color[rgb]{0.707107, 0.683537, 0.}1}{\color[rgb]{0.707107, 0.329983, 0.}1}{\color[rgb]{0.707107, 0.212132, 0.}0}_{2} & \hdots {\color[rgb]{0.308671, 0., 0.661438}1}{\color[rgb]{0., 0.935414, 0.904234}1}{\color[rgb]{0.173205, 0.866025, 0.}1}{\color[rgb]{0.707107, 0.683537, 0.}1}{\color[rgb]{0.707107, 0.329983, 0.}0}{\color[rgb]{0.707107, 0.212132, 0.}0}_{2} & \hdots {\color[rgb]{0.308671, 0., 0.661438}1}{\color[rgb]{0., 0.935414, 0.904234}1}{\color[rgb]{0.173205, 0.866025, 0.}1}{\color[rgb]{0.707107, 0.683537, 0.}0}{\color[rgb]{0.707107, 0.329983, 0.}0}{\color[rgb]{0.707107, 0.212132, 0.}0}_{2} & \hdots {\color[rgb]{0.308671, 0., 0.661438}1}{\color[rgb]{0., 0.935414, 0.904234}1}{\color[rgb]{0.173205, 0.866025, 0.}0}{\color[rgb]{0.707107, 0.683537, 0.}0}{\color[rgb]{0.707107, 0.329983, 0.}0}{\color[rgb]{0.707107, 0.212132, 0.}0}_{2} \\
M_{10}^{\Wit^\#} & \hdots {\color[rgb]{0.285774, 0., 0.612372}1}{\color[rgb]{0., 0.866025, 0.837158}0}{\color[rgb]{0.141421, 0.707107, 0.}1}{\color[rgb]{0., 0., 0.}0}{\color[rgb]{0., 0., 0.}1}{\color[rgb]{0., 0., 0.}0}_{2} & \hdots {\color[rgb]{0.285774, 0., 0.612372}0}{\color[rgb]{0., 0.866025, 0.837158}1}{\color[rgb]{0.141421, 0.707107, 0.}1}{\color[rgb]{0., 0., 0.}0}{\color[rgb]{0., 0., 0.}0}{\color[rgb]{0., 0., 0.}0}_{2} & \hdots {\color[rgb]{0.285774, 0., 0.612372}0}{\color[rgb]{0., 0.866025, 0.837158}1}{\color[rgb]{0.141421, 0.707107, 0.}1}{\color[rgb]{0., 0., 0.}0}{\color[rgb]{0., 0., 0.}0}{\color[rgb]{0., 0., 0.}0}_{2} & \hdots {\color[rgb]{0.285774, 0., 0.612372}1}{\color[rgb]{0., 0.866025, 0.837158}0}{\color[rgb]{0.141421, 0.707107, 0.}0}{\color[rgb]{0., 0., 0.}0}{\color[rgb]{0., 0., 0.}0}{\color[rgb]{0., 0., 0.}0}_{2} & \hdots {\color[rgb]{0.285774, 0., 0.612372}1}{\color[rgb]{0., 0.866025, 0.837158}1}{\color[rgb]{0.141421, 0.707107, 0.}0}{\color[rgb]{0., 0., 0.}0}{\color[rgb]{0., 0., 0.}0}{\color[rgb]{0., 0., 0.}0}_{2} & \hdots {\color[rgb]{0.285774, 0., 0.612372}0}{\color[rgb]{0., 0.866025, 0.837158}0}{\color[rgb]{0.141421, 0.707107, 0.}0}{\color[rgb]{0., 0., 0.}0}{\color[rgb]{0., 0., 0.}0}{\color[rgb]{0., 0., 0.}0}_{2} \\
M_{11}^{\Wit^\#} & \hdots {\color[rgb]{0.260875, 0., 0.559017}0}{\color[rgb]{0., 0.790569, 0.764217}0}{\color[rgb]{0.1, 0.5, 0.}0}{\color[rgb]{0.707107, 0.683537, 0.}0}{\color[rgb]{0.707107, 0.329983, 0.}0}{\color[rgb]{0.707107, 0.212132, 0.}0}_{2} & \hdots {\color[rgb]{0.260875, 0., 0.559017}0}{\color[rgb]{0., 0.790569, 0.764217}0}{\color[rgb]{0.1, 0.5, 0.}0}{\color[rgb]{0.707107, 0.683537, 0.}1}{\color[rgb]{0.707107, 0.329983, 0.}1}{\color[rgb]{0.707107, 0.212132, 0.}0}_{2} & \hdots {\color[rgb]{0.260875, 0., 0.559017}0}{\color[rgb]{0., 0.790569, 0.764217}0}{\color[rgb]{0.1, 0.5, 0.}0}{\color[rgb]{0.707107, 0.683537, 0.}1}{\color[rgb]{0.707107, 0.329983, 0.}1}{\color[rgb]{0.707107, 0.212132, 0.}0}_{2} & \hdots {\color[rgb]{0.260875, 0., 0.559017}1}{\color[rgb]{0., 0.790569, 0.764217}1}{\color[rgb]{0.1, 0.5, 0.}1}{\color[rgb]{0.707107, 0.683537, 0.}1}{\color[rgb]{0.707107, 0.329983, 0.}0}{\color[rgb]{0.707107, 0.212132, 0.}0}_{2} & \hdots {\color[rgb]{0.260875, 0., 0.559017}1}{\color[rgb]{0., 0.790569, 0.764217}0}{\color[rgb]{0.1, 0.5, 0.}1}{\color[rgb]{0.707107, 0.683537, 0.}0}{\color[rgb]{0.707107, 0.329983, 0.}0}{\color[rgb]{0.707107, 0.212132, 0.}0}_{2} & \hdots {\color[rgb]{0.260875, 0., 0.559017}1}{\color[rgb]{0., 0.790569, 0.764217}0}{\color[rgb]{0.1, 0.5, 0.}0}{\color[rgb]{0.707107, 0.683537, 0.}0}{\color[rgb]{0.707107, 0.329983, 0.}0}{\color[rgb]{0.707107, 0.212132, 0.}0}_{2} \\
M_{12}^{\Wit^\#} & \hdots {\color[rgb]{0.233333, 0., 0.5}0}{\color[rgb]{0., 0.707107, 0.683537}1}{\color[rgb]{0., 0., 0.}1}{\color[rgb]{0., 0., 0.}0}{\color[rgb]{0., 0., 0.}1}{\color[rgb]{0., 0., 0.}0}_{2} & \hdots {\color[rgb]{0.233333, 0., 0.5}0}{\color[rgb]{0., 0.707107, 0.683537}1}{\color[rgb]{0., 0., 0.}0}{\color[rgb]{0., 0., 0.}0}{\color[rgb]{0., 0., 0.}0}{\color[rgb]{0., 0., 0.}0}_{2} & \hdots {\color[rgb]{0.233333, 0., 0.5}0}{\color[rgb]{0., 0.707107, 0.683537}1}{\color[rgb]{0., 0., 0.}0}{\color[rgb]{0., 0., 0.}0}{\color[rgb]{0., 0., 0.}0}{\color[rgb]{0., 0., 0.}0}_{2} & \hdots {\color[rgb]{0.233333, 0., 0.5}0}{\color[rgb]{0., 0.707107, 0.683537}0}{\color[rgb]{0., 0., 0.}0}{\color[rgb]{0., 0., 0.}0}{\color[rgb]{0., 0., 0.}0}{\color[rgb]{0., 0., 0.}0}_{2} & \hdots {\color[rgb]{0.233333, 0., 0.5}1}{\color[rgb]{0., 0.707107, 0.683537}0}{\color[rgb]{0., 0., 0.}0}{\color[rgb]{0., 0., 0.}0}{\color[rgb]{0., 0., 0.}0}{\color[rgb]{0., 0., 0.}0}_{2} & \hdots {\color[rgb]{0.233333, 0., 0.5}0}{\color[rgb]{0., 0.707107, 0.683537}0}{\color[rgb]{0., 0., 0.}0}{\color[rgb]{0., 0., 0.}0}{\color[rgb]{0., 0., 0.}0}{\color[rgb]{0., 0., 0.}0}_{2} \\
M_{13}^{\Wit^\#} & \hdots {\color[rgb]{0.202073, 0., 0.433013}0}{\color[rgb]{0., 0.612372, 0.59196}0}{\color[rgb]{0.173205, 0.866025, 0.}0}{\color[rgb]{0.707107, 0.683537, 0.}0}{\color[rgb]{0.707107, 0.329983, 0.}0}{\color[rgb]{0.707107, 0.212132, 0.}0}_{2} & \hdots {\color[rgb]{0.202073, 0., 0.433013}1}{\color[rgb]{0., 0.612372, 0.59196}0}{\color[rgb]{0.173205, 0.866025, 0.}1}{\color[rgb]{0.707107, 0.683537, 0.}1}{\color[rgb]{0.707107, 0.329983, 0.}1}{\color[rgb]{0.707107, 0.212132, 0.}0}_{2} & \hdots {\color[rgb]{0.202073, 0., 0.433013}1}{\color[rgb]{0., 0.612372, 0.59196}0}{\color[rgb]{0.173205, 0.866025, 0.}1}{\color[rgb]{0.707107, 0.683537, 0.}1}{\color[rgb]{0.707107, 0.329983, 0.}1}{\color[rgb]{0.707107, 0.212132, 0.}0}_{2} & \hdots {\color[rgb]{0.202073, 0., 0.433013}1}{\color[rgb]{0., 0.612372, 0.59196}1}{\color[rgb]{0.173205, 0.866025, 0.}1}{\color[rgb]{0.707107, 0.683537, 0.}1}{\color[rgb]{0.707107, 0.329983, 0.}0}{\color[rgb]{0.707107, 0.212132, 0.}0}_{2} & \hdots {\color[rgb]{0.202073, 0., 0.433013}0}{\color[rgb]{0., 0.612372, 0.59196}1}{\color[rgb]{0.173205, 0.866025, 0.}1}{\color[rgb]{0.707107, 0.683537, 0.}0}{\color[rgb]{0.707107, 0.329983, 0.}0}{\color[rgb]{0.707107, 0.212132, 0.}0}_{2} & \hdots {\color[rgb]{0.202073, 0., 0.433013}0}{\color[rgb]{0., 0.612372, 0.59196}1}{\color[rgb]{0.173205, 0.866025, 0.}0}{\color[rgb]{0.707107, 0.683537, 0.}0}{\color[rgb]{0.707107, 0.329983, 0.}0}{\color[rgb]{0.707107, 0.212132, 0.}0}_{2} \\
M_{14}^{\Wit^\#} & \hdots {\color[rgb]{0.164992, 0., 0.353553}0}{\color[rgb]{0., 0.5, 0.483333}0}{\color[rgb]{0.141421, 0.707107, 0.}1}{\color[rgb]{0., 0., 0.}0}{\color[rgb]{0., 0., 0.}1}{\color[rgb]{0., 0., 0.}0}_{2} & \hdots {\color[rgb]{0.164992, 0., 0.353553}0}{\color[rgb]{0., 0.5, 0.483333}0}{\color[rgb]{0.141421, 0.707107, 0.}1}{\color[rgb]{0., 0., 0.}0}{\color[rgb]{0., 0., 0.}0}{\color[rgb]{0., 0., 0.}0}_{2} & \hdots {\color[rgb]{0.164992, 0., 0.353553}0}{\color[rgb]{0., 0.5, 0.483333}0}{\color[rgb]{0.141421, 0.707107, 0.}1}{\color[rgb]{0., 0., 0.}0}{\color[rgb]{0., 0., 0.}0}{\color[rgb]{0., 0., 0.}0}_{2} & \hdots {\color[rgb]{0.164992, 0., 0.353553}1}{\color[rgb]{0., 0.5, 0.483333}0}{\color[rgb]{0.141421, 0.707107, 0.}0}{\color[rgb]{0., 0., 0.}0}{\color[rgb]{0., 0., 0.}0}{\color[rgb]{0., 0., 0.}0}_{2} & \hdots {\color[rgb]{0.164992, 0., 0.353553}0}{\color[rgb]{0., 0.5, 0.483333}1}{\color[rgb]{0.141421, 0.707107, 0.}0}{\color[rgb]{0., 0., 0.}0}{\color[rgb]{0., 0., 0.}0}{\color[rgb]{0., 0., 0.}0}_{2} & \hdots {\color[rgb]{0.164992, 0., 0.353553}0}{\color[rgb]{0., 0.5, 0.483333}0}{\color[rgb]{0.141421, 0.707107, 0.}0}{\color[rgb]{0., 0., 0.}0}{\color[rgb]{0., 0., 0.}0}{\color[rgb]{0., 0., 0.}0}_{2} \\
M_{15}^{\Wit^\#} & \hdots {\color[rgb]{0.116667, 0., 0.25}0}{\color[rgb]{0., 0.353553, 0.341768}0}{\color[rgb]{0.1, 0.5, 0.}0}{\color[rgb]{0.707107, 0.683537, 0.}0}{\color[rgb]{0.707107, 0.329983, 0.}0}{\color[rgb]{0.707107, 0.212132, 0.}0}_{2} & \hdots {\color[rgb]{0.116667, 0., 0.25}0}{\color[rgb]{0., 0.353553, 0.341768}1}{\color[rgb]{0.1, 0.5, 0.}0}{\color[rgb]{0.707107, 0.683537, 0.}1}{\color[rgb]{0.707107, 0.329983, 0.}1}{\color[rgb]{0.707107, 0.212132, 0.}0}_{2} & \hdots {\color[rgb]{0.116667, 0., 0.25}0}{\color[rgb]{0., 0.353553, 0.341768}1}{\color[rgb]{0.1, 0.5, 0.}0}{\color[rgb]{0.707107, 0.683537, 0.}1}{\color[rgb]{0.707107, 0.329983, 0.}1}{\color[rgb]{0.707107, 0.212132, 0.}0}_{2} & \hdots {\color[rgb]{0.116667, 0., 0.25}1}{\color[rgb]{0., 0.353553, 0.341768}1}{\color[rgb]{0.1, 0.5, 0.}1}{\color[rgb]{0.707107, 0.683537, 0.}1}{\color[rgb]{0.707107, 0.329983, 0.}0}{\color[rgb]{0.707107, 0.212132, 0.}0}_{2} & \hdots {\color[rgb]{0.116667, 0., 0.25}0}{\color[rgb]{0., 0.353553, 0.341768}0}{\color[rgb]{0.1, 0.5, 0.}1}{\color[rgb]{0.707107, 0.683537, 0.}0}{\color[rgb]{0.707107, 0.329983, 0.}0}{\color[rgb]{0.707107, 0.212132, 0.}0}_{2} & \hdots {\color[rgb]{0.116667, 0., 0.25}0}{\color[rgb]{0., 0.353553, 0.341768}0}{\color[rgb]{0.1, 0.5, 0.}0}{\color[rgb]{0.707107, 0.683537, 0.}0}{\color[rgb]{0.707107, 0.329983, 0.}0}{\color[rgb]{0.707107, 0.212132, 0.}0}_{2} \\
M_{16}^{\Wit^\#} & \hdots {\color[rgb]{0., 0., 0.}1}{\color[rgb]{0., 0., 0.}1}{\color[rgb]{0., 0., 0.}1}{\color[rgb]{0., 0., 0.}0}{\color[rgb]{0., 0., 0.}1}{\color[rgb]{0., 0., 0.}0}_{2} & \hdots {\color[rgb]{0., 0., 0.}0}{\color[rgb]{0., 0., 0.}0}{\color[rgb]{0., 0., 0.}0}{\color[rgb]{0., 0., 0.}0}{\color[rgb]{0., 0., 0.}0}{\color[rgb]{0., 0., 0.}0}_{2} & \hdots {\color[rgb]{0., 0., 0.}0}{\color[rgb]{0., 0., 0.}0}{\color[rgb]{0., 0., 0.}0}{\color[rgb]{0., 0., 0.}0}{\color[rgb]{0., 0., 0.}0}{\color[rgb]{0., 0., 0.}0}_{2} & \hdots {\color[rgb]{0., 0., 0.}0}{\color[rgb]{0., 0., 0.}0}{\color[rgb]{0., 0., 0.}0}{\color[rgb]{0., 0., 0.}0}{\color[rgb]{0., 0., 0.}0}{\color[rgb]{0., 0., 0.}0}_{2} & \hdots {\color[rgb]{0., 0., 0.}0}{\color[rgb]{0., 0., 0.}0}{\color[rgb]{0., 0., 0.}0}{\color[rgb]{0., 0., 0.}0}{\color[rgb]{0., 0., 0.}0}{\color[rgb]{0., 0., 0.}0}_{2} & \hdots {\color[rgb]{0., 0., 0.}0}{\color[rgb]{0., 0., 0.}0}{\color[rgb]{0., 0., 0.}0}{\color[rgb]{0., 0., 0.}0}{\color[rgb]{0., 0., 0.}0}{\color[rgb]{0., 0., 0.}0}_{2} \\
M_{17}^{\Wit^\#} & \hdots {\color[rgb]{0.451848, 0., 0.968246}0}{\color[rgb]{0., 0.935414, 0.904234}0}{\color[rgb]{0.173205, 0.866025, 0.}0}{\color[rgb]{0.707107, 0.683537, 0.}0}{\color[rgb]{0.707107, 0.329983, 0.}0}{\color[rgb]{0.707107, 0.212132, 0.}0}_{2} & \hdots {\color[rgb]{0.451848, 0., 0.968246}1}{\color[rgb]{0., 0.935414, 0.904234}1}{\color[rgb]{0.173205, 0.866025, 0.}1}{\color[rgb]{0.707107, 0.683537, 0.}1}{\color[rgb]{0.707107, 0.329983, 0.}1}{\color[rgb]{0.707107, 0.212132, 0.}0}_{2} & \hdots {\color[rgb]{0.451848, 0., 0.968246}1}{\color[rgb]{0., 0.935414, 0.904234}1}{\color[rgb]{0.173205, 0.866025, 0.}1}{\color[rgb]{0.707107, 0.683537, 0.}1}{\color[rgb]{0.707107, 0.329983, 0.}1}{\color[rgb]{0.707107, 0.212132, 0.}0}_{2} & \hdots {\color[rgb]{0.451848, 0., 0.968246}1}{\color[rgb]{0., 0.935414, 0.904234}1}{\color[rgb]{0.173205, 0.866025, 0.}1}{\color[rgb]{0.707107, 0.683537, 0.}1}{\color[rgb]{0.707107, 0.329983, 0.}0}{\color[rgb]{0.707107, 0.212132, 0.}0}_{2} & \hdots {\color[rgb]{0.451848, 0., 0.968246}1}{\color[rgb]{0., 0.935414, 0.904234}1}{\color[rgb]{0.173205, 0.866025, 0.}1}{\color[rgb]{0.707107, 0.683537, 0.}0}{\color[rgb]{0.707107, 0.329983, 0.}0}{\color[rgb]{0.707107, 0.212132, 0.}0}_{2} & \hdots {\color[rgb]{0.451848, 0., 0.968246}1}{\color[rgb]{0., 0.935414, 0.904234}1}{\color[rgb]{0.173205, 0.866025, 0.}0}{\color[rgb]{0.707107, 0.683537, 0.}0}{\color[rgb]{0.707107, 0.329983, 0.}0}{\color[rgb]{0.707107, 0.212132, 0.}0}_{2} \\
M_{18}^{\Wit^\#} & \hdots {\color[rgb]{0.436527, 0., 0.935414}1}{\color[rgb]{0., 0.866025, 0.837158}0}{\color[rgb]{0.141421, 0.707107, 0.}1}{\color[rgb]{0., 0., 0.}0}{\color[rgb]{0., 0., 0.}1}{\color[rgb]{0., 0., 0.}0}_{2} & \hdots {\color[rgb]{0.436527, 0., 0.935414}1}{\color[rgb]{0., 0.866025, 0.837158}1}{\color[rgb]{0.141421, 0.707107, 0.}1}{\color[rgb]{0., 0., 0.}0}{\color[rgb]{0., 0., 0.}0}{\color[rgb]{0., 0., 0.}0}_{2} & \hdots {\color[rgb]{0.436527, 0., 0.935414}1}{\color[rgb]{0., 0.866025, 0.837158}1}{\color[rgb]{0.141421, 0.707107, 0.}1}{\color[rgb]{0., 0., 0.}0}{\color[rgb]{0., 0., 0.}0}{\color[rgb]{0., 0., 0.}0}_{2} & \hdots {\color[rgb]{0.436527, 0., 0.935414}1}{\color[rgb]{0., 0.866025, 0.837158}0}{\color[rgb]{0.141421, 0.707107, 0.}0}{\color[rgb]{0., 0., 0.}0}{\color[rgb]{0., 0., 0.}0}{\color[rgb]{0., 0., 0.}0}_{2} & \hdots {\color[rgb]{0.436527, 0., 0.935414}1}{\color[rgb]{0., 0.866025, 0.837158}1}{\color[rgb]{0.141421, 0.707107, 0.}0}{\color[rgb]{0., 0., 0.}0}{\color[rgb]{0., 0., 0.}0}{\color[rgb]{0., 0., 0.}0}_{2} & \hdots {\color[rgb]{0.436527, 0., 0.935414}0}{\color[rgb]{0., 0.866025, 0.837158}0}{\color[rgb]{0.141421, 0.707107, 0.}0}{\color[rgb]{0., 0., 0.}0}{\color[rgb]{0., 0., 0.}0}{\color[rgb]{0., 0., 0.}0}_{2} \\
M_{19}^{\Wit^\#} & \hdots {\color[rgb]{0.420648, 0., 0.901388}0}{\color[rgb]{0., 0.790569, 0.764217}0}{\color[rgb]{0.1, 0.5, 0.}0}{\color[rgb]{0.707107, 0.683537, 0.}0}{\color[rgb]{0.707107, 0.329983, 0.}0}{\color[rgb]{0.707107, 0.212132, 0.}0}_{2} & \hdots {\color[rgb]{0.420648, 0., 0.901388}1}{\color[rgb]{0., 0.790569, 0.764217}0}{\color[rgb]{0.1, 0.5, 0.}0}{\color[rgb]{0.707107, 0.683537, 0.}1}{\color[rgb]{0.707107, 0.329983, 0.}1}{\color[rgb]{0.707107, 0.212132, 0.}0}_{2} & \hdots {\color[rgb]{0.420648, 0., 0.901388}1}{\color[rgb]{0., 0.790569, 0.764217}0}{\color[rgb]{0.1, 0.5, 0.}0}{\color[rgb]{0.707107, 0.683537, 0.}1}{\color[rgb]{0.707107, 0.329983, 0.}1}{\color[rgb]{0.707107, 0.212132, 0.}0}_{2} & \hdots {\color[rgb]{0.420648, 0., 0.901388}1}{\color[rgb]{0., 0.790569, 0.764217}1}{\color[rgb]{0.1, 0.5, 0.}1}{\color[rgb]{0.707107, 0.683537, 0.}1}{\color[rgb]{0.707107, 0.329983, 0.}0}{\color[rgb]{0.707107, 0.212132, 0.}0}_{2} & \hdots {\color[rgb]{0.420648, 0., 0.901388}1}{\color[rgb]{0., 0.790569, 0.764217}0}{\color[rgb]{0.1, 0.5, 0.}1}{\color[rgb]{0.707107, 0.683537, 0.}0}{\color[rgb]{0.707107, 0.329983, 0.}0}{\color[rgb]{0.707107, 0.212132, 0.}0}_{2} & \hdots {\color[rgb]{0.420648, 0., 0.901388}1}{\color[rgb]{0., 0.790569, 0.764217}0}{\color[rgb]{0.1, 0.5, 0.}0}{\color[rgb]{0.707107, 0.683537, 0.}0}{\color[rgb]{0.707107, 0.329983, 0.}0}{\color[rgb]{0.707107, 0.212132, 0.}0}_{2} \\
M_{20}^{\Wit^\#} & \hdots {\color[rgb]{0.404145, 0., 0.866025}0}{\color[rgb]{0., 0.707107, 0.683537}1}{\color[rgb]{0., 0., 0.}1}{\color[rgb]{0., 0., 0.}0}{\color[rgb]{0., 0., 0.}1}{\color[rgb]{0., 0., 0.}0}_{2} & \hdots {\color[rgb]{0.404145, 0., 0.866025}1}{\color[rgb]{0., 0.707107, 0.683537}1}{\color[rgb]{0., 0., 0.}0}{\color[rgb]{0., 0., 0.}0}{\color[rgb]{0., 0., 0.}0}{\color[rgb]{0., 0., 0.}0}_{2} & \hdots {\color[rgb]{0.404145, 0., 0.866025}1}{\color[rgb]{0., 0.707107, 0.683537}1}{\color[rgb]{0., 0., 0.}0}{\color[rgb]{0., 0., 0.}0}{\color[rgb]{0., 0., 0.}0}{\color[rgb]{0., 0., 0.}0}_{2} & \hdots {\color[rgb]{0.404145, 0., 0.866025}0}{\color[rgb]{0., 0.707107, 0.683537}0}{\color[rgb]{0., 0., 0.}0}{\color[rgb]{0., 0., 0.}0}{\color[rgb]{0., 0., 0.}0}{\color[rgb]{0., 0., 0.}0}_{2} & \hdots {\color[rgb]{0.404145, 0., 0.866025}1}{\color[rgb]{0., 0.707107, 0.683537}0}{\color[rgb]{0., 0., 0.}0}{\color[rgb]{0., 0., 0.}0}{\color[rgb]{0., 0., 0.}0}{\color[rgb]{0., 0., 0.}0}_{2} & \hdots {\color[rgb]{0.404145, 0., 0.866025}0}{\color[rgb]{0., 0.707107, 0.683537}0}{\color[rgb]{0., 0., 0.}0}{\color[rgb]{0., 0., 0.}0}{\color[rgb]{0., 0., 0.}0}{\color[rgb]{0., 0., 0.}0}_{2} \\
M_{21}^{\Wit^\#} & \hdots {\color[rgb]{0.38694, 0., 0.829156}0}{\color[rgb]{0., 0.612372, 0.59196}0}{\color[rgb]{0.173205, 0.866025, 0.}0}{\color[rgb]{0.707107, 0.683537, 0.}0}{\color[rgb]{0.707107, 0.329983, 0.}0}{\color[rgb]{0.707107, 0.212132, 0.}0}_{2} & \hdots {\color[rgb]{0.38694, 0., 0.829156}0}{\color[rgb]{0., 0.612372, 0.59196}0}{\color[rgb]{0.173205, 0.866025, 0.}1}{\color[rgb]{0.707107, 0.683537, 0.}1}{\color[rgb]{0.707107, 0.329983, 0.}1}{\color[rgb]{0.707107, 0.212132, 0.}0}_{2} & \hdots {\color[rgb]{0.38694, 0., 0.829156}0}{\color[rgb]{0., 0.612372, 0.59196}0}{\color[rgb]{0.173205, 0.866025, 0.}1}{\color[rgb]{0.707107, 0.683537, 0.}1}{\color[rgb]{0.707107, 0.329983, 0.}1}{\color[rgb]{0.707107, 0.212132, 0.}0}_{2} & \hdots {\color[rgb]{0.38694, 0., 0.829156}1}{\color[rgb]{0., 0.612372, 0.59196}1}{\color[rgb]{0.173205, 0.866025, 0.}1}{\color[rgb]{0.707107, 0.683537, 0.}1}{\color[rgb]{0.707107, 0.329983, 0.}0}{\color[rgb]{0.707107, 0.212132, 0.}0}_{2} & \hdots {\color[rgb]{0.38694, 0., 0.829156}0}{\color[rgb]{0., 0.612372, 0.59196}1}{\color[rgb]{0.173205, 0.866025, 0.}1}{\color[rgb]{0.707107, 0.683537, 0.}0}{\color[rgb]{0.707107, 0.329983, 0.}0}{\color[rgb]{0.707107, 0.212132, 0.}0}_{2} & \hdots {\color[rgb]{0.38694, 0., 0.829156}0}{\color[rgb]{0., 0.612372, 0.59196}1}{\color[rgb]{0.173205, 0.866025, 0.}0}{\color[rgb]{0.707107, 0.683537, 0.}0}{\color[rgb]{0.707107, 0.329983, 0.}0}{\color[rgb]{0.707107, 0.212132, 0.}0}_{2} \\
M_{22}^{\Wit^\#} & \hdots {\color[rgb]{0.368932, 0., 0.790569}0}{\color[rgb]{0., 0.5, 0.483333}0}{\color[rgb]{0.141421, 0.707107, 0.}1}{\color[rgb]{0., 0., 0.}0}{\color[rgb]{0., 0., 0.}1}{\color[rgb]{0., 0., 0.}0}_{2} & \hdots {\color[rgb]{0.368932, 0., 0.790569}1}{\color[rgb]{0., 0.5, 0.483333}0}{\color[rgb]{0.141421, 0.707107, 0.}1}{\color[rgb]{0., 0., 0.}0}{\color[rgb]{0., 0., 0.}0}{\color[rgb]{0., 0., 0.}0}_{2} & \hdots {\color[rgb]{0.368932, 0., 0.790569}1}{\color[rgb]{0., 0.5, 0.483333}0}{\color[rgb]{0.141421, 0.707107, 0.}1}{\color[rgb]{0., 0., 0.}0}{\color[rgb]{0., 0., 0.}0}{\color[rgb]{0., 0., 0.}0}_{2} & \hdots {\color[rgb]{0.368932, 0., 0.790569}1}{\color[rgb]{0., 0.5, 0.483333}0}{\color[rgb]{0.141421, 0.707107, 0.}0}{\color[rgb]{0., 0., 0.}0}{\color[rgb]{0., 0., 0.}0}{\color[rgb]{0., 0., 0.}0}_{2} & \hdots {\color[rgb]{0.368932, 0., 0.790569}0}{\color[rgb]{0., 0.5, 0.483333}1}{\color[rgb]{0.141421, 0.707107, 0.}0}{\color[rgb]{0., 0., 0.}0}{\color[rgb]{0., 0., 0.}0}{\color[rgb]{0., 0., 0.}0}_{2} & \hdots {\color[rgb]{0.368932, 0., 0.790569}0}{\color[rgb]{0., 0.5, 0.483333}0}{\color[rgb]{0.141421, 0.707107, 0.}0}{\color[rgb]{0., 0., 0.}0}{\color[rgb]{0., 0., 0.}0}{\color[rgb]{0., 0., 0.}0}_{2} \\
M_{23}^{\Wit^\#} & \hdots {\color[rgb]{0.35, 0., 0.75}0}{\color[rgb]{0., 0.353553, 0.341768}0}{\color[rgb]{0.1, 0.5, 0.}0}{\color[rgb]{0.707107, 0.683537, 0.}0}{\color[rgb]{0.707107, 0.329983, 0.}0}{\color[rgb]{0.707107, 0.212132, 0.}0}_{2} & \hdots {\color[rgb]{0.35, 0., 0.75}1}{\color[rgb]{0., 0.353553, 0.341768}1}{\color[rgb]{0.1, 0.5, 0.}0}{\color[rgb]{0.707107, 0.683537, 0.}1}{\color[rgb]{0.707107, 0.329983, 0.}1}{\color[rgb]{0.707107, 0.212132, 0.}0}_{2} & \hdots {\color[rgb]{0.35, 0., 0.75}1}{\color[rgb]{0., 0.353553, 0.341768}1}{\color[rgb]{0.1, 0.5, 0.}0}{\color[rgb]{0.707107, 0.683537, 0.}1}{\color[rgb]{0.707107, 0.329983, 0.}1}{\color[rgb]{0.707107, 0.212132, 0.}0}_{2} & \hdots {\color[rgb]{0.35, 0., 0.75}1}{\color[rgb]{0., 0.353553, 0.341768}1}{\color[rgb]{0.1, 0.5, 0.}1}{\color[rgb]{0.707107, 0.683537, 0.}1}{\color[rgb]{0.707107, 0.329983, 0.}0}{\color[rgb]{0.707107, 0.212132, 0.}0}_{2} & \hdots {\color[rgb]{0.35, 0., 0.75}0}{\color[rgb]{0., 0.353553, 0.341768}0}{\color[rgb]{0.1, 0.5, 0.}1}{\color[rgb]{0.707107, 0.683537, 0.}0}{\color[rgb]{0.707107, 0.329983, 0.}0}{\color[rgb]{0.707107, 0.212132, 0.}0}_{2} & \hdots {\color[rgb]{0.35, 0., 0.75}0}{\color[rgb]{0., 0.353553, 0.341768}0}{\color[rgb]{0.1, 0.5, 0.}0}{\color[rgb]{0.707107, 0.683537, 0.}0}{\color[rgb]{0.707107, 0.329983, 0.}0}{\color[rgb]{0.707107, 0.212132, 0.}0}_{2} \\
M_{24}^{\Wit^\#} & \hdots {\color[rgb]{0.329983, 0., 0.707107}1}{\color[rgb]{0., 0., 0.}1}{\color[rgb]{0., 0., 0.}1}{\color[rgb]{0., 0., 0.}0}{\color[rgb]{0., 0., 0.}1}{\color[rgb]{0., 0., 0.}0}_{2} & \hdots {\color[rgb]{0.329983, 0., 0.707107}1}{\color[rgb]{0., 0., 0.}0}{\color[rgb]{0., 0., 0.}0}{\color[rgb]{0., 0., 0.}0}{\color[rgb]{0., 0., 0.}0}{\color[rgb]{0., 0., 0.}0}_{2} & \hdots {\color[rgb]{0.329983, 0., 0.707107}1}{\color[rgb]{0., 0., 0.}0}{\color[rgb]{0., 0., 0.}0}{\color[rgb]{0., 0., 0.}0}{\color[rgb]{0., 0., 0.}0}{\color[rgb]{0., 0., 0.}0}_{2} & \hdots {\color[rgb]{0.329983, 0., 0.707107}0}{\color[rgb]{0., 0., 0.}0}{\color[rgb]{0., 0., 0.}0}{\color[rgb]{0., 0., 0.}0}{\color[rgb]{0., 0., 0.}0}{\color[rgb]{0., 0., 0.}0}_{2} & \hdots {\color[rgb]{0.329983, 0., 0.707107}0}{\color[rgb]{0., 0., 0.}0}{\color[rgb]{0., 0., 0.}0}{\color[rgb]{0., 0., 0.}0}{\color[rgb]{0., 0., 0.}0}{\color[rgb]{0., 0., 0.}0}_{2} & \hdots {\color[rgb]{0.329983, 0., 0.707107}0}{\color[rgb]{0., 0., 0.}0}{\color[rgb]{0., 0., 0.}0}{\color[rgb]{0., 0., 0.}0}{\color[rgb]{0., 0., 0.}0}{\color[rgb]{0., 0., 0.}0}_{2} \\
M_{25}^{\Wit^\#} & \hdots {\color[rgb]{0.308671, 0., 0.661438}0}{\color[rgb]{0., 0.935414, 0.904234}0}{\color[rgb]{0.173205, 0.866025, 0.}0}{\color[rgb]{0.707107, 0.683537, 0.}0}{\color[rgb]{0.707107, 0.329983, 0.}0}{\color[rgb]{0.707107, 0.212132, 0.}0}_{2} & \hdots {\color[rgb]{0.308671, 0., 0.661438}0}{\color[rgb]{0., 0.935414, 0.904234}1}{\color[rgb]{0.173205, 0.866025, 0.}1}{\color[rgb]{0.707107, 0.683537, 0.}1}{\color[rgb]{0.707107, 0.329983, 0.}1}{\color[rgb]{0.707107, 0.212132, 0.}0}_{2} & \hdots {\color[rgb]{0.308671, 0., 0.661438}0}{\color[rgb]{0., 0.935414, 0.904234}1}{\color[rgb]{0.173205, 0.866025, 0.}1}{\color[rgb]{0.707107, 0.683537, 0.}1}{\color[rgb]{0.707107, 0.329983, 0.}1}{\color[rgb]{0.707107, 0.212132, 0.}0}_{2} & \hdots {\color[rgb]{0.308671, 0., 0.661438}1}{\color[rgb]{0., 0.935414, 0.904234}1}{\color[rgb]{0.173205, 0.866025, 0.}1}{\color[rgb]{0.707107, 0.683537, 0.}1}{\color[rgb]{0.707107, 0.329983, 0.}0}{\color[rgb]{0.707107, 0.212132, 0.}0}_{2} & \hdots {\color[rgb]{0.308671, 0., 0.661438}1}{\color[rgb]{0., 0.935414, 0.904234}1}{\color[rgb]{0.173205, 0.866025, 0.}1}{\color[rgb]{0.707107, 0.683537, 0.}0}{\color[rgb]{0.707107, 0.329983, 0.}0}{\color[rgb]{0.707107, 0.212132, 0.}0}_{2} & \hdots {\color[rgb]{0.308671, 0., 0.661438}1}{\color[rgb]{0., 0.935414, 0.904234}1}{\color[rgb]{0.173205, 0.866025, 0.}0}{\color[rgb]{0.707107, 0.683537, 0.}0}{\color[rgb]{0.707107, 0.329983, 0.}0}{\color[rgb]{0.707107, 0.212132, 0.}0}_{2} \\
M_{26}^{\Wit^\#} & \hdots {\color[rgb]{0.285774, 0., 0.612372}1}{\color[rgb]{0., 0.866025, 0.837158}0}{\color[rgb]{0.141421, 0.707107, 0.}1}{\color[rgb]{0., 0., 0.}0}{\color[rgb]{0., 0., 0.}1}{\color[rgb]{0., 0., 0.}0}_{2} & \hdots {\color[rgb]{0.285774, 0., 0.612372}0}{\color[rgb]{0., 0.866025, 0.837158}1}{\color[rgb]{0.141421, 0.707107, 0.}1}{\color[rgb]{0., 0., 0.}0}{\color[rgb]{0., 0., 0.}0}{\color[rgb]{0., 0., 0.}0}_{2} & \hdots {\color[rgb]{0.285774, 0., 0.612372}0}{\color[rgb]{0., 0.866025, 0.837158}1}{\color[rgb]{0.141421, 0.707107, 0.}1}{\color[rgb]{0., 0., 0.}0}{\color[rgb]{0., 0., 0.}0}{\color[rgb]{0., 0., 0.}0}_{2} & \hdots {\color[rgb]{0.285774, 0., 0.612372}1}{\color[rgb]{0., 0.866025, 0.837158}0}{\color[rgb]{0.141421, 0.707107, 0.}0}{\color[rgb]{0., 0., 0.}0}{\color[rgb]{0., 0., 0.}0}{\color[rgb]{0., 0., 0.}0}_{2} & \hdots {\color[rgb]{0.285774, 0., 0.612372}1}{\color[rgb]{0., 0.866025, 0.837158}1}{\color[rgb]{0.141421, 0.707107, 0.}0}{\color[rgb]{0., 0., 0.}0}{\color[rgb]{0., 0., 0.}0}{\color[rgb]{0., 0., 0.}0}_{2} & \hdots {\color[rgb]{0.285774, 0., 0.612372}0}{\color[rgb]{0., 0.866025, 0.837158}0}{\color[rgb]{0.141421, 0.707107, 0.}0}{\color[rgb]{0., 0., 0.}0}{\color[rgb]{0., 0., 0.}0}{\color[rgb]{0., 0., 0.}0}_{2} \\
M_{27}^{\Wit^\#} & \hdots {\color[rgb]{0.260875, 0., 0.559017}0}{\color[rgb]{0., 0.790569, 0.764217}0}{\color[rgb]{0.1, 0.5, 0.}0}{\color[rgb]{0.707107, 0.683537, 0.}0}{\color[rgb]{0.707107, 0.329983, 0.}0}{\color[rgb]{0.707107, 0.212132, 0.}0}_{2} & \hdots {\color[rgb]{0.260875, 0., 0.559017}0}{\color[rgb]{0., 0.790569, 0.764217}0}{\color[rgb]{0.1, 0.5, 0.}0}{\color[rgb]{0.707107, 0.683537, 0.}1}{\color[rgb]{0.707107, 0.329983, 0.}1}{\color[rgb]{0.707107, 0.212132, 0.}0}_{2} & \hdots {\color[rgb]{0.260875, 0., 0.559017}0}{\color[rgb]{0., 0.790569, 0.764217}0}{\color[rgb]{0.1, 0.5, 0.}0}{\color[rgb]{0.707107, 0.683537, 0.}1}{\color[rgb]{0.707107, 0.329983, 0.}1}{\color[rgb]{0.707107, 0.212132, 0.}0}_{2} & \hdots {\color[rgb]{0.260875, 0., 0.559017}1}{\color[rgb]{0., 0.790569, 0.764217}1}{\color[rgb]{0.1, 0.5, 0.}1}{\color[rgb]{0.707107, 0.683537, 0.}1}{\color[rgb]{0.707107, 0.329983, 0.}0}{\color[rgb]{0.707107, 0.212132, 0.}0}_{2} & \hdots {\color[rgb]{0.260875, 0., 0.559017}1}{\color[rgb]{0., 0.790569, 0.764217}0}{\color[rgb]{0.1, 0.5, 0.}1}{\color[rgb]{0.707107, 0.683537, 0.}0}{\color[rgb]{0.707107, 0.329983, 0.}0}{\color[rgb]{0.707107, 0.212132, 0.}0}_{2} & \hdots {\color[rgb]{0.260875, 0., 0.559017}1}{\color[rgb]{0., 0.790569, 0.764217}0}{\color[rgb]{0.1, 0.5, 0.}0}{\color[rgb]{0.707107, 0.683537, 0.}0}{\color[rgb]{0.707107, 0.329983, 0.}0}{\color[rgb]{0.707107, 0.212132, 0.}0}_{2} \\
M_{28}^{\Wit^\#} & \hdots {\color[rgb]{0.233333, 0., 0.5}0}{\color[rgb]{0., 0.707107, 0.683537}1}{\color[rgb]{0., 0., 0.}1}{\color[rgb]{0., 0., 0.}0}{\color[rgb]{0., 0., 0.}1}{\color[rgb]{0., 0., 0.}0}_{2} & \hdots {\color[rgb]{0.233333, 0., 0.5}0}{\color[rgb]{0., 0.707107, 0.683537}1}{\color[rgb]{0., 0., 0.}0}{\color[rgb]{0., 0., 0.}0}{\color[rgb]{0., 0., 0.}0}{\color[rgb]{0., 0., 0.}0}_{2} & \hdots {\color[rgb]{0.233333, 0., 0.5}0}{\color[rgb]{0., 0.707107, 0.683537}1}{\color[rgb]{0., 0., 0.}0}{\color[rgb]{0., 0., 0.}0}{\color[rgb]{0., 0., 0.}0}{\color[rgb]{0., 0., 0.}0}_{2} & \hdots {\color[rgb]{0.233333, 0., 0.5}0}{\color[rgb]{0., 0.707107, 0.683537}0}{\color[rgb]{0., 0., 0.}0}{\color[rgb]{0., 0., 0.}0}{\color[rgb]{0., 0., 0.}0}{\color[rgb]{0., 0., 0.}0}_{2} & \hdots {\color[rgb]{0.233333, 0., 0.5}1}{\color[rgb]{0., 0.707107, 0.683537}0}{\color[rgb]{0., 0., 0.}0}{\color[rgb]{0., 0., 0.}0}{\color[rgb]{0., 0., 0.}0}{\color[rgb]{0., 0., 0.}0}_{2} & \hdots {\color[rgb]{0.233333, 0., 0.5}0}{\color[rgb]{0., 0.707107, 0.683537}0}{\color[rgb]{0., 0., 0.}0}{\color[rgb]{0., 0., 0.}0}{\color[rgb]{0., 0., 0.}0}{\color[rgb]{0., 0., 0.}0}_{2} \\
M_{29}^{\Wit^\#} & \hdots {\color[rgb]{0.202073, 0., 0.433013}0}{\color[rgb]{0., 0.612372, 0.59196}0}{\color[rgb]{0.173205, 0.866025, 0.}0}{\color[rgb]{0.707107, 0.683537, 0.}0}{\color[rgb]{0.707107, 0.329983, 0.}0}{\color[rgb]{0.707107, 0.212132, 0.}0}_{2} & \hdots {\color[rgb]{0.202073, 0., 0.433013}1}{\color[rgb]{0., 0.612372, 0.59196}0}{\color[rgb]{0.173205, 0.866025, 0.}1}{\color[rgb]{0.707107, 0.683537, 0.}1}{\color[rgb]{0.707107, 0.329983, 0.}1}{\color[rgb]{0.707107, 0.212132, 0.}0}_{2} & \hdots {\color[rgb]{0.202073, 0., 0.433013}1}{\color[rgb]{0., 0.612372, 0.59196}0}{\color[rgb]{0.173205, 0.866025, 0.}1}{\color[rgb]{0.707107, 0.683537, 0.}1}{\color[rgb]{0.707107, 0.329983, 0.}1}{\color[rgb]{0.707107, 0.212132, 0.}0}_{2} & \hdots {\color[rgb]{0.202073, 0., 0.433013}1}{\color[rgb]{0., 0.612372, 0.59196}1}{\color[rgb]{0.173205, 0.866025, 0.}1}{\color[rgb]{0.707107, 0.683537, 0.}1}{\color[rgb]{0.707107, 0.329983, 0.}0}{\color[rgb]{0.707107, 0.212132, 0.}0}_{2} & \hdots {\color[rgb]{0.202073, 0., 0.433013}0}{\color[rgb]{0., 0.612372, 0.59196}1}{\color[rgb]{0.173205, 0.866025, 0.}1}{\color[rgb]{0.707107, 0.683537, 0.}0}{\color[rgb]{0.707107, 0.329983, 0.}0}{\color[rgb]{0.707107, 0.212132, 0.}0}_{2} & \hdots {\color[rgb]{0.202073, 0., 0.433013}0}{\color[rgb]{0., 0.612372, 0.59196}1}{\color[rgb]{0.173205, 0.866025, 0.}0}{\color[rgb]{0.707107, 0.683537, 0.}0}{\color[rgb]{0.707107, 0.329983, 0.}0}{\color[rgb]{0.707107, 0.212132, 0.}0}_{2} \\
M_{30}^{\Wit^\#} & \hdots {\color[rgb]{0.164992, 0., 0.353553}0}{\color[rgb]{0., 0.5, 0.483333}0}{\color[rgb]{0.141421, 0.707107, 0.}1}{\color[rgb]{0., 0., 0.}0}{\color[rgb]{0., 0., 0.}1}{\color[rgb]{0., 0., 0.}0}_{2} & \hdots {\color[rgb]{0.164992, 0., 0.353553}0}{\color[rgb]{0., 0.5, 0.483333}0}{\color[rgb]{0.141421, 0.707107, 0.}1}{\color[rgb]{0., 0., 0.}0}{\color[rgb]{0., 0., 0.}0}{\color[rgb]{0., 0., 0.}0}_{2} & \hdots {\color[rgb]{0.164992, 0., 0.353553}0}{\color[rgb]{0., 0.5, 0.483333}0}{\color[rgb]{0.141421, 0.707107, 0.}1}{\color[rgb]{0., 0., 0.}0}{\color[rgb]{0., 0., 0.}0}{\color[rgb]{0., 0., 0.}0}_{2} & \hdots {\color[rgb]{0.164992, 0., 0.353553}1}{\color[rgb]{0., 0.5, 0.483333}0}{\color[rgb]{0.141421, 0.707107, 0.}0}{\color[rgb]{0., 0., 0.}0}{\color[rgb]{0., 0., 0.}0}{\color[rgb]{0., 0., 0.}0}_{2} & \hdots {\color[rgb]{0.164992, 0., 0.353553}0}{\color[rgb]{0., 0.5, 0.483333}1}{\color[rgb]{0.141421, 0.707107, 0.}0}{\color[rgb]{0., 0., 0.}0}{\color[rgb]{0., 0., 0.}0}{\color[rgb]{0., 0., 0.}0}_{2} & \hdots {\color[rgb]{0.164992, 0., 0.353553}0}{\color[rgb]{0., 0.5, 0.483333}0}{\color[rgb]{0.141421, 0.707107, 0.}0}{\color[rgb]{0., 0., 0.}0}{\color[rgb]{0., 0., 0.}0}{\color[rgb]{0., 0., 0.}0}_{2} \\
M_{31}^{\Wit^\#} & \hdots {\color[rgb]{0.116667, 0., 0.25}0}{\color[rgb]{0., 0.353553, 0.341768}0}{\color[rgb]{0.1, 0.5, 0.}0}{\color[rgb]{0.707107, 0.683537, 0.}0}{\color[rgb]{0.707107, 0.329983, 0.}0}{\color[rgb]{0.707107, 0.212132, 0.}0}_{2} & \hdots {\color[rgb]{0.116667, 0., 0.25}0}{\color[rgb]{0., 0.353553, 0.341768}1}{\color[rgb]{0.1, 0.5, 0.}0}{\color[rgb]{0.707107, 0.683537, 0.}1}{\color[rgb]{0.707107, 0.329983, 0.}1}{\color[rgb]{0.707107, 0.212132, 0.}0}_{2} & \hdots {\color[rgb]{0.116667, 0., 0.25}0}{\color[rgb]{0., 0.353553, 0.341768}1}{\color[rgb]{0.1, 0.5, 0.}0}{\color[rgb]{0.707107, 0.683537, 0.}1}{\color[rgb]{0.707107, 0.329983, 0.}1}{\color[rgb]{0.707107, 0.212132, 0.}0}_{2} & \hdots {\color[rgb]{0.116667, 0., 0.25}1}{\color[rgb]{0., 0.353553, 0.341768}1}{\color[rgb]{0.1, 0.5, 0.}1}{\color[rgb]{0.707107, 0.683537, 0.}1}{\color[rgb]{0.707107, 0.329983, 0.}0}{\color[rgb]{0.707107, 0.212132, 0.}0}_{2} & \hdots {\color[rgb]{0.116667, 0., 0.25}0}{\color[rgb]{0., 0.353553, 0.341768}0}{\color[rgb]{0.1, 0.5, 0.}1}{\color[rgb]{0.707107, 0.683537, 0.}0}{\color[rgb]{0.707107, 0.329983, 0.}0}{\color[rgb]{0.707107, 0.212132, 0.}0}_{2} & \hdots {\color[rgb]{0.116667, 0., 0.25}0}{\color[rgb]{0., 0.353553, 0.341768}0}{\color[rgb]{0.1, 0.5, 0.}0}{\color[rgb]{0.707107, 0.683537, 0.}0}{\color[rgb]{0.707107, 0.329983, 0.}0}{\color[rgb]{0.707107, 0.212132, 0.}0}_{2} \\
\end{array}$}
\end{center}

\tableofcontents
\section{Introduction}

\begin{definition}[{\cite[Sections 3.1--2]{Katz}}]\label{defn_moment_sequence}
    For $A$ a torsion-free $\Z_p$-algebra, an \emph{$A$--valued $p$--adic moment sequence} is a sequence $M_n \in A$ such that for any rational polynomial $f(r) = \sum_n a_n r^n \in \Q[r]$ with $f(\Z_p^\times) \subseteq \Z_p$, the sum $\sum_n a_n M_n \in A \otimes \Q$ is an $A$--integer.
\end{definition}

Moment sequences arise out of $p$--adic integrals via the correspondence
\begin{align*}
    \int_{\Z_p^\times} dM \, \sum_{n=0}^N a_n r^n & := \sum_{n=0}^N a_n M_n, &
    M_n & := \int_{\Z_p^\times} dM \, r^n.
\end{align*}
The integrality condition can be interpreted as a family of \emph{K\"ummer congruences} on the sequence elements.
For example, the polynomial $p^{-1}(r^p - r)$ is $p$--adically integer-valued, hence $p^{-1}(M_p - M_1)$ must be an $A$--integer, or equivalently $M_p \equiv M_1 \pmod{p}$.
Other choices of test polynomial yield other congruences.

We prove the following:
\begin{maintheorem}\label{MainTheoremWitten}
    Pick any $c \in \Z_p^\times \setminus \{\pm 1\}$.
    There is a $\Z\ps{q}\ls{t}^\wedge_p$--valued moment sequence whose terms for $n \ge 1$ are given by
    \begin{align*}
    M_1^{\Wit^\#} & = (1 - c) \cdot \frac{\log(1 - t)}{(2 \pi i)^2} \cdot (G_2(q) - p \cdot G_2(q^p)) \\
    & \quad + \frac{1 - c}{2 \pi i} \cdot \left(\wp_q^{(-1)}\left(\frac{\log(1 - t)}{- 2 \pi i}\right) - \wp_{q^p}^{(-1)}\left(\frac{p \log(1 - t)}{- 2 \pi i}\right)\right), \\
    M_2^{\Wit^\#} & = (1 - c^2) \cdot \frac{G_2(q) - p \cdot G_2(q^p)}{-2 (2 \pi i)^2} \\
    & \quad + (1 - c^2) \cdot \left(\wp_q \left(\frac{\log(1 - t)}{-2 \pi i}\right) - \wp_{q^p} \left(\frac{p \log(1 - t)}{-2 \pi i}\right) \right), \\
    M_{n \ge 3}^{\Wit^\#} & = \frac{1 - c^n}{(2 \pi i)^n} \cdot \left( \wp_q^{(n-2)} \left(\frac{\log(1 - t)}{-2 \pi i}\right) - \wp_{q^p}^{(n-2)} \left(\frac{p \log(1 - t)}{-2 \pi i}\right) \right),
    \qedhere
    \end{align*}
    where $\wp_q$ is the $q$--expansion of the Weierstrass elliptic function, $\wp_q^{(n)}$ is its $n${\th} derivative, and $G_2$ is the second Eisenstein series.
\end{maintheorem}

\noindent
As warm-up, we also prove the following theorem:
\begin{maintheorem}\label{MainTheoremTodd}
    Pick any $c \in \Z_p^\times \setminus \{\pm 1\}$.
    There is a $\Z\ls{t}^\wedge_p$--valued moment sequence whose terms for $n \ge 1$ are given by
    \[(1 - c^n)\left(\sum_{m=1}^n \frac{\Delta^m[r^n](0)}{m} \left( (t^{-1})^m - p^{n-1} t^{-pm} (1 - p \zeta)^{-m} \right) \right),\]
    where $\Delta^m[r^n](0)$ is the evaluation at zero of $m${\th} forward finite difference of the test polynomial $r^n$, and where $\zeta \in \Z[t^{-1}]$ is defined by $(t^{-1} - 1)^p = t^{-p} - 1 + p \zeta$.
\end{maintheorem}

\begin{remark}
The $p$ in front of $\zeta$ yields a convergent sum in $\Z\ls{t}^\wedge_p$, despite the unboundedly increasing powers of $t^{-1}$.
\end{remark}

\begin{remark}
\Cref{MainTheoremTodd} is essentially the component of \Cref{MainTheoremWitten} induced by the pole of $\wp^{(n-2)}$.
More exactly:
\begin{itemize}
    \item The two Main Theorems' terms of nonpositive $t$--degree match exactly.
    \item The two Main Theorems' terms of vanishing $t$-- and $q$--degree are exactly the moments of the Mazur measure~\cite[Section 4.3]{Lang}.
    \item The terms of \Cref{MainTheoremWitten} of vanishing $t$--degree are exactly those of the $MF_{*, *}$--valued measure constructed by Katz~\cite[Lemma 3.5.6]{Katz}, appearing also in Ando, Hopkins, and Rezk~\cite[Section 10]{AHR}.
\end{itemize}
\end{remark}

\begin{remark}
See \Cref{ToddTableAt3} and \Cref{WittenTableAt3} for visualizations of these congruences at $p = 3$, and \Cref{ToddTableAt2} and \Cref{WittenTableAt2} (which extends the title figure) for $p = 2$.
\end{remark}

\subsection*{Acknowledgments}
We would like to thank Charles Rezk for helpful conversations,
Jonathan Beardsley for helpful comments on a draft, and  
Matthew Ando, Shachar Carmeli, and Tomer Schlank for their words of encouragement.
The second author would like to thank BART, where much of this work was carried out.

\section{R\'esum\'es}

Our main theorems are largely an application of existing work, especially a recent paper of the first author~\cite{CarmeliLuecke}.
Accordingly, we recall below the inputs which we require to complete our calculation.

\subsection{R\'esum\'e on elliptic functions}

It will be convenient to have formulas on hand for a variety of standard functions appearing in the theory of complex elliptic curves.
To fix terms, let us consider (twisted) functions on the compactified moduli of elliptic curves, referred to as \emph{modular forms}.
Such functions are typically presented in three ways~\cite[Section I.1, Remark 3.3]{Silverman}:
\begin{enumerate}
    \item A modular form $f$ of weight $k$ is a function on choices of lattice $\Lambda \subset \C$ with the property $f(c \cdot \Lambda) = c^{-k} f(\Lambda)$ and which extends to a meromorphic function at $\operatorname{span} \{+i \cdot \infty, 1\}$.
    \item Selecting a basis $\{\tau, 1\}$ for the (possibly positively rescaled) lattice $\Lambda$, $f$ can be understood as a function of $\tau \in \h$ satisfying $f(M \cdot \tau) = (c \tau + d)^{-k} f(\tau)$ and extending to a meromorphic function at $\tau = +i \cdot \infty$.
    \item Using periodicity against $\SL_2 \Z$ and meromorphicity at $\tau = + i \cdot \infty$, we may write the Fourier series of $f$ as a bounded-below sum $\sum_{n=-m}^\infty a_n q^n$, where $q = e^{2 \pi i \tau}$, called the \emph{$q$--expansion}.%
    \footnote{However, it is not trivial to discern when an arbitrary $q$--series is a modular form.}
\end{enumerate}
We will also be interested in functions on the universal elliptic curve, called \emph{Jacobi forms}~\cite[Section I.1]{EichlerZagier}, modeled as functions on $\C \times \h$ equivariant for $\SL_2 \Z \ltimes \Z^2$.
Such functions again can be parameterized in terms of $\Lambda$, $\tau$, or $q$.

\begin{example}\label{EisensteinSeries}\label{DivisorSumFunction}
For $k \ge 2$, the $k${\th} \emph{Eisenstein series} $G_k$ is the (quasi\footnote{$G_2(\tau)$ does not quite transform as a modular form, but instead acquires an extra summand.})modular form defined by the lattice sum~\cite[Equation 2.2.5]{Katz} \[G_k(\Lambda) = \sum_{\lambda \in \Lambda^*} \frac{1}{\lambda^k}.\]
It is evidently of weight $k$.
Note that if $k$ is odd, the sum telescopes to zero.
In the even case, the $q$--expansion of $G_{2k}$ is \[\frac{(2k-1)!}{(2 \pi i)^{2k}} \cdot G_{2k}(q) = -\frac{B_{2k}}{2k} + 2 \cdot \sum_{j=1}^\infty \sigma_{2k-1}(j) q^j,\] where $B_{2k}$ is the $2k${\th} Bernoulli number, defined by $\sum_{n=0}^\infty B_n x^n = \frac{x}{e^x - 1}$, and where $\sigma_{2k-1}(j) = \sum_{d \mid j} d^{2k-1}$ is the divisor-sum function~\cite[Equation 2.4.8]{Katz}.
\end{example}

\begin{example}\label{SigmaFunction}
The \emph{Weierstrass $\sigma$--function} is defined by~\cite[Proposition 5.4.a]{Silverman} \[\sigma_\Lambda(z) = z \cdot \prod_{\lambda \in \Lambda^*} \left[ \left(1 - \frac{z}{\lambda} \right) \cdot \exp\left(\frac{z}{\lambda} + \frac{1}{2} \left( \frac{z}{\lambda} \right)^2 \right) \right].\]
Among other properties, it is designed so that its logarithm becomes a generating function for Eisenstein series~\cite[Equation 2.2.7]{Katz}: \[\log z - \log \sigma_\Lambda(z) = \sum_{n=3}^\infty \frac{(2 \pi i z)^n}{n!} \cdot \left( \frac{(n-1)!}{(2 \pi i)^n} \cdot G_n(\Lambda) \right).\]
We will need a product formula for its $q$--expansion~\cite[Theorems 6.4, 6.3.b]{Silverman}: \[2 \pi i \cdot \sigma_q(z) = \exp\left(\frac{1}{2} (2 \pi i \cdot z + G_2(q) \cdot z^2) \right) \cdot (1-u) \cdot \prod_{j=1}^\infty \frac{(1 - q^j u)(1 - q^j u^{-1})}{(1 - q^j)^2},\] where $u = e^{-2 \pi i z}$.
Lastly, the second logarithmic derivative of $\sigma$ is named the \emph{Weierstrass $\wp$--function}~\cite[Proposition 5.4.c]{Silverman}: \[\wp_\Lambda(z) = -\frac{d^2}{dz^2} \log \sigma_\Lambda(z),\]
which is a meromorphic Jacobi form of index $0$ and weight $2$.
\end{example}

\subsection{R\'esum\'e on elliptic cohomology}
In this subsection we recall some basic elements of complex orientation theory.

\begin{definition}\label{defn_cx_or}
    Let $R$ be a homotopy ring spectrum.
    The following two pieces of data are equivalent:
    \begin{enumerate}
        \item A \emph{complex orientation} of $R$ is a homotopy ring map $\MU\to R$ from the complex bordism spectrum~\cite[Lemma I.4.6]{Adams}.
        \item A \emph{coordinate} is a class $\xi \in R^2\CP^\infty$ which restricts to $\Sigma^2 1 \in R^2 S^2$ along $S^2 \simeq \CP^1 \to \CP^\infty$ ~\cite[I.2.1]{Adams}.
    \end{enumerate}
    A coordinate induces an isomorphism $R_*\ps{\xi} \to R^* \CP^\infty$.
    The K{\"u}nneth formula then gives an isomorphism $R^*(\CP^\infty \times \CP^\infty)\simeq R_* \ps{\xi_l,\xi_r}$ where $\xi_l$ and $\xi_r$ are the pullbacks of $\xi$ along the two projections.
    Let $\mu\co \CP^\infty \times \CP^\infty \to \CP^\infty$ be the multiplication map classifying the tensor product of line bundles.
    The \emph{formal group law} $F(\xi_l,\xi_r)$ is defined to be the power series associated to the element $\mu^* \xi$~\cite[Lemma I.2.7]{Adams}.
    We often denote it by $\xi_l +_F \xi_r$.
\end{definition}

\begin{example}\label{rem_additive}
    Note that $\MU$ is a connective ring spectrum with $\pi_0 \MU = \Z$, so the Postnikov truncation functor induces a ring map $\MU \to \Z$.
    Thus, if $R$ is a $\Z$-algebra, there is a canonical orientation $\MU \to \Z \to R$.
    We will refer to this as the \emph{additive} orientation of $R$, as its group law is $F(x_l, x_r) = x_l + x_r$.
\end{example}

\begin{example}\label{examp_Todd_Witt_coords}
    Write $\pi_* \KU = \Z[\beta^\pm]$ and $\pi_* \KU_\Tate = \Z[\beta^\pm]\ps{q}$ with $|\beta| = 2$ and $|q| = 0$.
    \begin{itemize}
        \item
        The \emph{Todd orientation} $\omega_\mathrm{Td}\co \MU \to \KU$ has coordinate~\cite[I.2.3]{Adams} \[\xi_\mathrm{Td} := \beta^{-1} (1 - [L]) \in \KU^2 \CP^\infty. \]
        \item
        The \emph{Witten orientation} $\omega_\mathrm{Wit}\co \MU \to \KU_\Tate$ has coordinate~\cite[(6)]{Zagier}
    \[\xi_\mathrm{Wit} = \beta^{-1 }(1 - [L]) \prod_{j=1}^\infty \frac{(1 - [L] q^j) (1 - [L^{-1}] q^j)}{(1 - q^j)^2}\in \KU_\Tate^2 \CP^\infty.\]
    \end{itemize}
    For later convenience we also define the degree zero element $s := \beta \xi_\mathrm{Td}$.
\end{example}

\begin{definition}[{\cite[Corollary I.7.15]{Adams}}]
     Let $\omega\co \MU\to R$ be an orientation, which begets an orientation $\omega_\Q\co \MU\to R\otimes \Q$.
     Since $R\otimes \Q$ is a $\Z$-algebra, it has an additive orientation (cf.\ \Cref{rem_additive}) with coordinate $x$.
     Write $\xi_\omega$ for the coordinate associated to $\omega_\Q$.
     In terms of $x$, it is a power series of the form $\mathrm{exp}_\omega(x) := \xi_\omega(x) = x + O(x^2)$.
     It is called the \emph{exponential} of the formal group law $F_\omega$ associated to $\omega$ and satisfies the formula \[\mathrm{exp}_\omega(x_l)+_{F_\omega}\mathrm{exp}_\omega(x_r)=\mathrm{exp}_\omega(x_l+x_r).\]
\end{definition}

\begin{example}[{\cite[Chapter 3]{Hirzebruch}}]\label{examp_todd_witten_exp}
    The exponential of the Todd orientation is
    \[\mathrm{exp}_\mathrm{Td}(x)=\beta^{-1}(1-e^{-\beta x}).\]
    Therefore we have
    \[\mathrm{exp}_\mathrm{Wit}(x)=\beta^{-1}(1-e^{-\beta x})\prod_{j=1}^\infty\frac{(1-e^{-\beta x}q^j)(1-e^{\beta x}q^j)}{(1-q^j)^2}.\]
\end{example}

\begin{definition}[{\cite[Section 3.4]{AHR}}]\label{defn_neps}
    The \emph{nepers} of an orientation $\omega\co \MU\to R$ are the elements $N_n^\omega \in\pi_{2n}R\otimes \Q$ defined by the formula
    \[\log \left(\frac{x}{\mathrm{exp}_\omega(x)}\right)=\sum_{n=1}^\infty N_n^\omega \frac{x^n}{n!}\]
    When $R$ has a distinguished periodicity element $\beta\in\pi_2R$, it will often be convenient to work with the degree zero element $\beta^{-n}N_n^\omega $ instead of $N_n^\omega $.
\end{definition}

\begin{remark}[{\cite[Proposition 3.12]{AHR}}]
    The division of the two Thom classes for the universal line bundle $L\to\CP^\infty$ is classically referred to as the \emph{Hirzebruch series} of $\omega$. One can form the same division of Thom classes for any vector bundle, and the neper $N_n^\omega$ is what one obtains for the bundle over $S^n$ corresponding to a generator of $\pi_{2n}\BU$.
\end{remark}

Next, we outline a construction for getting new orientations from old ones, due to Ando, French, and Ganter~\cite{AFG}.
First, we recall a fundamental piece of homotopy theory.

\begin{definition}[{e.g., \cite{GreenleesMay}, \cite{NikolausScholze}}]\label{defn_Tate}
    Let $R$ be a complex oriented ring spectrum with coordinate $\gamma$.
    The \emph{Tate fixed-point spectrum} $R^{t\T}$ is the ring spectrum obtained by inverting the element $\gamma$ in the ring spectrum $R^{\CP^\infty}$.
    In particular $\pi_* R^{t\T}$ is the Laurent series ring $R_*\ls{\gamma}$, and there is a \emph{unit} map $u\co R \to R^{t\T}$ of ring spectra, 
    inducing the obvious inclusion $R_*\to R_*\ls{\gamma}$ on homotopy groups.
\end{definition}

\begin{definition}[{\cite{AFG}}]\label{defn_sharp}
    Let $\omega \co \MU \to R$ be a complex orientation with its induced isomorphism $\pi_* R^{t\T} \simeq R_*\ls{\gamma}$.
    Define the \emph{unit orientation} of $R^{t\T}$ as the composite
    \[\MU \xrightarrow{\omega} R \xrightarrow{u} R^{t\T},\]
    and let $\xi$ and $+_F$ denote the corresponding coordinate and formal group law.
    Define the \emph{sharped orientation} $\omega^\# \co \MU \to R^{t\T}$ to be the ring map associated to the coordinate
    \[\xi \cdot \frac{\gamma}{\xi +_F \gamma}.\]
    Let $y$ be such that $\exp_\omega(y) = \gamma$.
    Then the sharped exponential satisfies
    \[\exp_{\omega^\#}(x) = \exp_\omega(x) \cdot \frac{\exp_\omega(y)}{\exp_\omega(x + y)}.\]
\end{definition}

\begin{remark}[{\cite[Sections 6--7]{AFG}}]
    The \emph{Jacobi orientation} arises as the sharp construction applied to the Witten orientation.
    On homotopy, it takes values in meromorphic Jacobi forms of index $0$.
\end{remark}



\subsection{R\'esum\'e on $\E_\infty$--rings}

Whereas in classical algebra commutativity is a \emph{property} of a ring and maps of rings are automatically compatible with any commutativity present, a ring spectrum in homotopy theory requires \emph{extra data} to witness its commutativity, as do maps of commutative ring spectra.
A ring spectrum with such extra structure is called an $\E_\infty$--ring, and such a map an $\E_\infty$--ring map.
The complex bordism spectrum $\MU$ is naturally an $\E_\infty$--ring (\cite[Chapter III]{May}), and thus if $R$ is an $\E_\infty$--ring one can ask when a complex orientation $\MU \to R$ admits an $\E_\infty$--structure.
This question has received much interest and has seen recent progress (\cite{AHR}, \cite{HopkinsLawson}, \cite{HahnYuan}, \cite{Balderrama}, \cite{nullstel}, \ldots), but it is usually quite difficult---the number of known $\E_\infty$--orientations is small, the crown jewel of which is the \emph{string orientation} of $\mathrm{tmf}$.

Our interest comes from a result of the first author:

\begin{theorem}[{\cite[Theorem 5.14]{CarmeliLuecke}}]\label{thm_cl}
    The sharp construction sends $\E_\infty$--orientations to $\E_\infty$--orientations.
    \qed
\end{theorem}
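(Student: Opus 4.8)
The plan is to reduce the statement to the assertion that a certain \emph{twist} of an $\E_\infty$--orientation is again $\E_\infty$. Since the Tate construction $(-)^{t\T}$ is lax symmetric monoidal---it is built from the limit $(-)^{h\T}$, the colimit $(-)_{h\T}$, a cofiber, and, for complex oriented inputs, the localization inverting $\gamma$ (\cite{GreenleesMay}, \cite{NikolausScholze})---it carries $\E_\infty$--rings to $\E_\infty$--rings and $\E_\infty$--ring maps to $\E_\infty$--ring maps, and the unit $u\co R\to R^{t\T}$ is such a map. Hence the unit orientation $\MU\xrightarrow{\omega}R\xrightarrow{u}R^{t\T}$ is $\E_\infty$ whenever $\omega$ is. Comparing coordinates via \Cref{defn_sharp}, $\omega^\#$ is obtained from the unit orientation by multiplying its coordinate $\xi$ by the unit $v:=\gamma/(\xi+_F\gamma)\in(R^{t\T})^0\CP^\infty$, which satisfies $v\equiv 1 \pmod{\xi}$. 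I would recognize $v$ as the Thom class of the rank--zero virtual bundle $\zeta:=T-(L\otimes T)$ over $\CP^\infty$, where $L$ is the tautological line bundle and $T$ is the ``Tate'' line bundle of Euler class $\gamma$; crucially $\zeta$ acquires an \emph{invertible} Thom class only after inverting $\gamma$, which is exactly why the twist is available over $R^{t\T}$ and not over $R^{\CP^\infty}$.

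It then remains to show that twisting an $\E_\infty$--orientation of an $\E_\infty$--ring $A$ by $\zeta$ yields an $\E_\infty$--orientation. I would organize this using the fact that $\MU$ is a Thom $\E_\infty$--ring (\cite[\S2--4]{AHR}): the space of $\E_\infty$--orientations of $A$ is a trivialization space for the associated $J$--homomorphism $bu\to b\gl_1 A$, and when nonempty it is a torsor under $\mathrm{Map}_{\mathrm{Sp}}(bu,\gl_1 A)$. The plan is to exhibit $\zeta$ as a genuine map of spectra $\Sigma^\infty\CP^\infty\to\gl_1(R^{t\T})$---an $\E_\infty$--coherent family of units---assembled from two $\E_\infty$ ingredients: the $\E_\infty$--coherent Thom classes of $L$ and of $L\otimes T$ supplied by the (already $\E_\infty$) unit orientation, and the $\E_\infty$--group multiplication $\mu\co\CP^\infty\times\CP^\infty\to\CP^\infty$ on $\CP^\infty=B\T$, which builds $L\otimes T$ from $L$ and the constant bundle $T$. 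Adding the resulting class to the unit orientation inside the torsor produces an $\E_\infty$--orientation, and one checks its coordinate equals $v\cdot\xi$, identifying it with $\omega^\#$.

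The main obstacle is precisely the $\E_\infty$--coherence invoked in the previous paragraph: promoting the twisting construction from homotopy coherence---the $H_\infty$ level, i.e.\ compatibility with all power operations, which is what \cite{AFG} already establish---to full $\E_\infty$ coherence. This is the genuine content of \cite[Theorem 5.14]{CarmeliLuecke} and the reason it is needed beyond \cite{AFG}. I would isolate the only structure used beyond the $\E_\infty$ unit orientation, namely the localization $R^{\CP^\infty}\to R^{t\T}$ inverting $\gamma$ together with the $\E_\infty$--group $B\T$; since both $\mu$ and this localization are $\E_\infty$, and Thom classes of bundles pulled back along $\E_\infty$ maps stay $\E_\infty$--coherent once the orientation is, the twist should inherit $\E_\infty$ coherence. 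The delicate point---where I would expect to invoke the descent and ambidexterity technology for $B\T$--indexed limits and colimits postdating \cite{AFG}---is verifying that inverting $\gamma$ is compatible with the extended--power and norm diagrams encoding the $\E_\infty$--structure, so that the formal--group formula of \Cref{defn_sharp} lifts coherently rather than merely up to homotopy.
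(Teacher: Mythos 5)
The paper does not actually prove this statement: it is imported wholesale from \cite[Theorem 5.14]{CarmeliLuecke} and used as a black box (hence the immediate \qedsymbol\ with no argument), so your proposal has to be judged on its own terms as a proof of the cited theorem. Your first paragraph is fine --- the unit orientation $u \circ \omega$ is $\E_\infty$ by lax symmetric monoidality of $(-)^{t\T}$, and the sharped coordinate differs from it by the unit $v = \gamma/(\xi +_F \gamma)$, correctly identified as the relative Thom class of the rank-zero bundle $T - L \otimes T$, which becomes invertible only after inverting $\gamma$. The gap is in how you feed this into the torsor. You correctly state that the space of $\E_\infty$ maps $\MU \to A$, when nonempty, is a torsor over $\mathrm{Map}_{\mathrm{Sp}}(\bu, \gl_1 A)$, but you then propose to act on it by a class constructed only on $\Sigma^\infty \CP^\infty$. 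A spectrum map $\Sigma^\infty \CP^\infty \to \gl_1(R^{t\T})$ is not an element of $\mathrm{Map}_{\mathrm{Sp}}(\bu, \gl_1(R^{t\T}))$ and does not act on the torsor: you would need to extend your unit along the standard map $\Sigma^\infty \CP^\infty \to \bu$, and producing such extensions is precisely the ``splitting principle for $\E_\infty$ orientations'' that fails in general and that the obstruction-theoretic machinery of \cite{AHR} exists to circumvent. Nothing in your sketch addresses this extension problem; in particular, the $\E_\infty$-ness of $\mu \co \CP^\infty \times \CP^\infty \to \CP^\infty$ does not help, because that is the tensor-product structure on $B\T$, whereas the $\E_\infty$ structure governing $\bu \to b\gl_1 \S$ and the orientation torsor is direct sum on $BU$, under which $\CP^\infty$ is not even closed.

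Independently of this, your final paragraph explicitly defers the remaining coherence step --- compatibility of the localization inverting $\gamma$ with the extended-power diagrams --- to ``the descent and ambidexterity technology'' and acknowledges that ``this is the genuine content of \cite[Theorem 5.14]{CarmeliLuecke}.'' Deferring the load-bearing step to the theorem being proved means the proposal is a road map rather than a proof. As a road map it is reasonable (isolate the twist, realize it as a unit available only after localization, promote $H_\infty$ to $\E_\infty$), but both steps that carry the actual content --- extending the unit from $\CP^\infty$ to $\bu$, and verifying $\E_\infty$ coherence of the localized twist --- are left unestablished.
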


\noindent
Therefore the sharped string orientation (or any of the handful of $\E_\infty$--orientations it begets, cf.\ \Cref{thm_todd_witten_Einfty}) automatically admits this extra $\E_\infty$--structure, and we would like to understand it in detail.
Ultimately, we will conclude that it entails the congruences advertised in the introduction. 

Let us describe others' pursuit of the program to express this $\E_\infty$--structure in a manageable form for palatable target rings.
There is a distinguished class of spectra, called \emph{$K(1)$--local}, whose $\E_\infty$--rings carry an extremely nice theory of cohomology operations, controlled by an essentially unique operation $\psi^p$~\cite[Chapter IX]{BMMS}.
A key insight of Ando, Hopkins, and Rezk is that if $R$ is $K(1)$--local, then an $\E_\infty$-structure on an orientation $\omega\co \MU \to R$ is well captured by its interaction with $\psi^p$.
Our rings of interest, $\KU^{t\T}$ and $\KU_\Tate^{t\T}$, become $K(1)$--local after $p$--completion.
Their $\psi^p$ maps are as follows:

\begin{example}\label{examp_psi_data}
     Upon $p$--completion, the Todd and Witten orientations of \Cref{examp_Todd_Witt_coords} induce identifications
     \begin{align*}
     \pi_*(\KU^{t\T})^\wedge_p & = \Z[\beta^\pm]\ls{t}^\wedge_p, &
     \pi_*(\KU_\Tate^{t\T})^\wedge_p & = \Z[\beta^\pm]\ps{q}\ls{t}^\wedge_p,
     \end{align*}
     where we have adjusted the Laurent element $t := \beta \gamma$ into degree 0 (cf.\ \Cref{defn_Tate}).
     The operations $\psi^p$ are the ring homomorphisms determined by  
\begin{align*}
    \psi^p(t) & = 1 - (1 - t)^p, &
    \psi^p(q) & = q^p, & \psi^p(\beta)=p\beta.
\end{align*}
For a homotopy element $f_q(t) \cdot \beta^k \in \pi_{2k}\KU_\Tate^{t\T} \cong \Z\ps{q}\ls{t}^\wedge_p\{\beta^k\}$ we then have
\[\psi^p(f_q(t) \cdot \beta^k) = p^k f_{q^p}(1 - (1 - t)^p) \cdot \beta^k.\]
\end{example}

\noindent
We measure the interaction of $\psi^p$ with the orientation $\omega$ like so:

\begin{definition}[{\cite[Proposition 7.10]{AHR}}]\label{defn_ahr_elements}
    Pick any $c \in \Z_p^\times \setminus \{\pm 1\}$.
    The \emph{moments} of an orientation $\omega\co \MU \to R$ of a $K(1)$--local ring are the elements
    \[M_n^\omega = (1 - c^n) \left( \mathrm{id} - \frac{\psi^p}{p} \right)(N_n^\omega) \in \pi_{2n} R \otimes \Q.\]
    As at the end of \Cref{defn_neps}, when $R$ has a distinguished periodicity element $\beta \in \pi_2 R$, it will often be convenient to work with the degree 0 element $\beta^{-n} M_n^\omega$. However, one has to take a bit of care, since $\psi^p(\beta)$ need not be equal to $\beta$ (as \Cref{examp_psi_data} shows).
\end{definition}

\begin{remark}[\cite{Rezk}]
    The operator $\mathrm{id} - p^{-1} \psi^p$ is part of a more general theory of \emph{logarithmic cohomology operations}.
\end{remark}

\begin{remark}[{\cite[Proposition 14.6]{AHR}}]
    The factor $1-c^n$ comes from an \emph{Adams resolution} of the $K(1)$--localized sphere spectrum.
\end{remark}

\noindent
We can now state a key result of Ando--Hopkins--Rezk:

\begin{theorem}[{\cite[Theorem 6.1]{AHR}, \cite[Appendix A.4]{Peterson}}]\label{thm_ahr}
    Let $\omega\co \MU \to R$ be an $\E_\infty$--orientation of a $K(1)$--local ring $R$ which is flat over $\KU^\wedge_p$.
    The moments $M^\omega_{n \ge 1}$ belong to a $p$--adic moment sequence with values in $R_*$.
    \qed
\end{theorem}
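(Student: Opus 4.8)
This is essentially \cite[Theorem 6.1]{AHR} (see also \cite[Appendix A.4]{Peterson}); here is how I would run the argument. The plan is to extract from an $\E_\infty$--orientation of such an $R$ an $R_*$--valued $p$--adic measure on $\Z_p^\times$, and then to observe that the moment--sequence property of \Cref{defn_moment_sequence} is exactly the statement that integration against a measure carries $\Z_p$--valued functions to $R_*$--integers.

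First I would present the orientations as a torsor. Since $\MU$ is the Thom $\E_\infty$--ring of a map of $\E_\infty$--spaces $\BU \to BGL_1\S$ (the complex $J$--homomorphism) and the space of $\E_\infty$--orientations $\MU \to R$ is nonempty by hypothesis, it is a torsor over $\Omega^\infty \operatorname{map}_{\mathrm{Sp}}(\bu, \gl_1 R)$. Next I would $K(1)$--localize. Because $\bu$ is $1$--connected the units $(\pi_0 R)^\times$ are irrelevant, and Rezk's logarithm $\ell \co \gl_1 R \to R$ lets one replace $\gl_1 R$ by $R$ in the relevant truncated mapping spectrum, realizing on homotopy the operator $\operatorname{id} - p^{-1}\psi^p$ of \Cref{defn_ahr_elements}; meanwhile $L_{K(1)}\bu \simeq \KU^\wedge_p$, and the presentation of the $K(1)$--local sphere as the fiber of $\psi^c - 1$ on $\KU^\wedge_p$ for $c$ a topological generator of $\Z_p^\times$ (and, at $p = 2$, the analogous presentation over $\mathrm{KO}^\wedge_2$) enters here. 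Since $\psi^c$ multiplies a weight--$n$ class by $c^n$, the map $\psi^c - 1$ contributes the factor $c^n - 1 = -(1-c^n)$. Running these reductions against one another---and using the flatness of $R$ over $\KU^\wedge_p$ to make the requisite K\"unneth and universal--coefficient identifications integrally---one obtains an invariant of $\omega$ lying in $[L_{K(1)}\bu, R] \cong R^0(\KU^\wedge_p)$ whose pairing with the generator of $\pi_{2n}\BU$ is, by \Cref{defn_neps}, exactly $M_n^\omega = (1-c^n)(\operatorname{id} - p^{-1}\psi^p)(N_n^\omega)$.

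The crucial step is to recognize this invariant as a measure. The ring of stable operations $\KU^\wedge_p \to \KU^\wedge_p$ is $\operatorname{Map}^{\mathrm{cts}}(\Z_p^\times, \Z_p)$ (the Adams operations), so---using again the flatness of $R$ over $\KU^\wedge_p$---the group $R^0(\KU^\wedge_p)$ is identified with the group of continuous $\Z_p$--linear functionals $\operatorname{Map}^{\mathrm{cts}}(\Z_p^\times, \Z_p) \to R_*$, that is, with $R_*$--valued bounded measures on $\Z_p^\times$. Let $\mu_\omega$ be the measure so attached to $\omega$; one then checks that $\int_{\Z_p^\times} r^n\, d\mu_\omega = M_n^\omega$. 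Granting this dictionary the theorem is immediate: for any $f(r) = \sum_n a_n r^n \in \Q[r]$ with $f(\Z_p^\times) \subseteq \Z_p$, the restriction of $f$ to the compact group $\Z_p^\times$ is a $\Z_p$--valued continuous function, so $\sum_n a_n M_n^\omega = \int_{\Z_p^\times} f\, d\mu_\omega \in R_*$, which is precisely \Cref{defn_moment_sequence}. (Flatness over $\KU^\wedge_p$ also guarantees that $R_*$ is a torsion--free $\Z_p$--algebra, so that this integrality statement even makes sense.)

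I expect the main obstacle to be the passage from the $\E_\infty$--orientation to the measure: one must verify that an $\E_\infty$--structure on the orientation is genuinely \emph{equivalent} to compatibility of its logarithmic characteristic class with the power operation $\psi^p$ and with the $\psi^c$--resolution of $L_{K(1)}\S$---i.e.\ that no further obstruction group intervenes---and then match the resulting integrals against the nepers. A secondary obstacle is the prime $p = 2$, where the resolution must be run through $\mathrm{KO}^\wedge_2$ and where upgrading the statement from a topological generator of the relevant copy of $\Z_2$ to every $c \in \Z_p^\times \setminus \{\pm 1\}$ takes an extra step; the cleanest route is to observe that for such $c$ the measure $\mu_\omega^{(c)}$ is built from $\mu_\omega^{(c_0)}$ (for a fixed admissible $c_0$) by finitely many pushforwards along multiplication maps of $\Z_p^\times$, each of which preserves the class of $\Z_p$--valued functions.
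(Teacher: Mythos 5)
The paper does not prove this statement---it is imported wholesale from \cite[Theorem 6.1]{AHR} and \cite[Appendix A.4]{Peterson} (hence the \qed in the statement)---so there is no internal proof to compare against; your sketch is a faithful outline of the cited argument (Thom-spectrum torsor, Rezk logarithm producing $\operatorname{id}-p^{-1}\psi^p$, the $\psi^c-1$ resolution producing $1-c^n$, and the identification of $R^0(\KU^\wedge_p)$ with $R_*$--valued measures on $\Z_p^\times$ via flatness and $\KU^0_p\KU_p\cong\operatorname{Map}^{\mathrm{cts}}(\Z_p^\times,\Z_p)$). The obstacles you flag (the equivalence of the $\E_\infty$--structure with the measure condition, and the $p=2$ / general--$c$ bookkeeping) are exactly the points the cited sources spend their effort on, so the sketch is consistent with the paper's use of the result.
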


\begin{remark}[{\cite[Lemma 7.14, Proposition 14.6.2, Section 3.5]{AHR}, \cite{Miller}}]
    Orientations are automatically suitably compatible with the unit map of the ring spectrum, which manifests in the moment sequence as the identity \[\lim_{j \to \infty} M_{(p-1) p^j} = M_0 = p^{-1} \log c^{p-1}.\]
\end{remark}

Finally, we need some existing $\E_\infty$--orientations of $\KU$ and $\KU_\Tate$ to which we will apply \Cref{thm_cl} to get $\E_\infty$--orientations of $(\KU^{t\T})^\wedge_p$ and $(\KU_\Tate^{t\T})^\wedge_p$, and then further apply \Cref{thm_ahr} to get $p$--adic moment sequences.
We record the following theorem.

\begin{theorem}[{\cite[Theorem 10.3, Proposition 10.10]{AHR}}]\label{thm_todd_witten_Einfty}
    After $p$--completion, the orientations of \Cref{examp_Todd_Witt_coords} 
    \begin{align*}
    \omega_\mathrm{Td}\co \MU & \to \KU^\wedge_p, &
    \omega_\mathrm{Wit}\co \MU & \to (\KU_\Tate)^\wedge_p
    \end{align*}
    admit $\E_\infty$--structures.
    \qed
\end{theorem}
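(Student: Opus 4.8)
The plan is to invoke the \emph{converse} of \Cref{thm_ahr}, which is the other half of the Ando--Hopkins--Rezk package: if $R$ is a $K(1)$--local $\E_\infty$--ring that is flat over $\KU^\wedge_p$, then a rational orientation $\MU \to R \otimes \Q$ (equivalently, a choice of exponential as in \Cref{examp_todd_witten_exp}) lifts to an $\E_\infty$--orientation precisely when its moments $M^\omega_{n \ge 1}$ of \Cref{defn_ahr_elements} form an $R_*$--valued $p$--adic moment sequence and satisfy the unit compatibility $\lim_{j} M^\omega_{(p-1)p^j} = p^{-1}\log c^{p-1}$ recorded above. Both $\KU^\wedge_p$ and $(\KU_\Tate)^\wedge_p$ are flat over $\KU^\wedge_p$ and become $K(1)$--local after $p$--completion, so it suffices to take the rational orientations cut out by $\exp_{\mathrm{Td}}$ and $\exp_{\mathrm{Wit}}$, compute their moments, and verify these two conditions.

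For the Todd orientation, \Cref{examp_todd_witten_exp} gives $x/\exp_{\mathrm{Td}}(x) = \beta x/(1 - e^{-\beta x})$, and a short manipulation of the generating function for Bernoulli numbers pins down the nepers: $\beta^{-1}N^{\mathrm{Td}}_1 = \tfrac12$, $\beta^{-2k}N^{\mathrm{Td}}_{2k} = -B_{2k}/(2k)$, and $N^{\mathrm{Td}}_n = 0$ for odd $n \ge 3$. Since $\psi^p$ scales $\pi_{2n}\KU^\wedge_p$ by $p^n$ (cf.\ \Cref{examp_psi_data}), applying $\id - p^{-1}\psi^p$ together with the factor $1 - c^n$ yields $\beta^{-2k}M^{\mathrm{Td}}_{2k} = -(1-c^{2k})(1-p^{2k-1})B_{2k}/(2k)$ and $M^{\mathrm{Td}}_n = 0$ for odd $n$. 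These are exactly the moments of the Mazur measure on $\Z_p^\times$~\cite[Section 4.3]{Lang}: up to the stabilizing factor $1 - c^{2k}$ and the Euler factor $1 - p^{2k-1}$ they recover the Bernoulli quotients $B_{2k}/(2k)$, whose $p$--adic congruences are the classical Kummer and von Staudt--Clausen ones, while the unit compatibility is the standard $p$--adic limit of these quantities as $(p-1)p^j \to 0$. The converse of \Cref{thm_ahr} then produces the $\E_\infty$--structure on $\omega_{\mathrm{Td}}$.

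For the Witten orientation the computation is the same except that the infinite product in $\exp_{\mathrm{Wit}}$ adds summands to $\log(x/\exp_{\mathrm{Wit}}(x))$; these are controlled by the Eisenstein series $G_{2k}$, with the quasimodular $G_2$ occupying the anomalous weight--$2$ slot, and their $q$--expansions are built from the divisor sums $\sigma_{2k-1}$ as in \Cref{EisensteinSeries}. One then checks that the resulting moments $\beta^{-n}M^{\mathrm{Wit}}_n$ are precisely the moments of the two--variable $p$--adic measure valued in $p$--adic modular forms constructed by Katz~\cite[Lemma 3.5.6]{Katz} --- the same measure reappearing in \cite[Section 10]{AHR} --- whose existence is essentially Serre's family of $p$--adic Eisenstein series; passing to the constant term of the $q$--expansion reduces the moment--sequence and unit--compatibility checks to the Todd case just treated. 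A second application of the converse of \Cref{thm_ahr} gives the $\E_\infty$--structure on $\omega_{\mathrm{Wit}}$.

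The main obstacle is the converse of \Cref{thm_ahr} itself --- the statement that the moment--sequence condition is \emph{sufficient}, not merely necessary. This is the technical heart of \cite{AHR}: it rests on $K(1)$--local Goerss--Hopkins obstruction theory, a computation of the relevant topological Andr\'e--Quillen cohomology of $\MU$ over the $\E_\infty$--operad, the $\theta$--algebra (power operation) structure carried by $K(1)$--local $\KU$--algebras, and the translation of the resulting obstruction class into a $p$--adic measure on $\Z_p^\times$. The number--theoretic verifications --- the Kummer congruences and Serre's $p$--adic Eisenstein family --- are the second essential ingredient, though classical; everything else, namely extracting the nepers from \Cref{examp_todd_witten_exp} and applying $\id - p^{-1}\psi^p$, is routine bookkeeping.
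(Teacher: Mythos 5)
The paper offers no proof of this statement: it is imported wholesale from Ando--Hopkins--Rezk (their Theorem 10.3 and Proposition 10.10), with only a remark explaining how to pass from their $\MU\langle 6\rangle$--statement to $\MU$ itself in the Witten case. Your sketch is a faithful reconstruction of the cited argument---the sufficiency half of the $K(1)$--local orientation criterion, fed with the Mazur measure (Kummer congruences for Bernoulli numbers) for Todd and Katz's measure of $p$--adic Eisenstein series for Witten---and your direct application of that criterion to $\MU \to (\KU_\Tate)^\wedge_p$ is exactly the ``same proof with the target ring changed'' that the paper's remark prescribes.
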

\begin{remark}
    To be precise, the statement of Proposition 10.10 of \cite{AHR} produces an $\E_\infty$--structure on the $\MU\<6\>$--orientation associated to $\omega_\mathrm{Wit}$.
    However, the same proof, with the target ring changed to $\pi_* \KU_\Tate$, shows that $\omega_\mathrm{Wit}$ itself admits an $\E_\infty$--orientation.
    One can also use obstruction theory along the Whitehead tower of $\bu$ 
    to push the $\E_\infty$--structure from $\MU\<6\>$ to $\MU$.
\end{remark}


\section{Proofs of the Main Theorems}

We now apply this machinery to the sharped orientations of (Tate) $K$--theory from \Cref{thm_todd_witten_Einfty} to produce $p$--adic moment sequences.
It will be convenient to fix the following notation:
\begin{align*}
    2 \pi i z & = x = -\log(1 - s) = -\log(u), &
    2 \pi i \alpha & = y = -\log(1 - t) = -\log(v).
\end{align*}
We remark that while $s$ agrees with the element defined at the end of \Cref{examp_Todd_Witt_coords}, $x$ does not quite agree with the additive coordinate of previous sections---it is the degree 0 element gotten by using the periodicity element $\beta$. This slight abuse of notation is well worth it, as the formulas that follow would otherwise be infested with powers of $\beta$.

Throughout, let $c \in \Z_p^\times \setminus \{\pm 1\}$ be a choice of $p$--adic unit.

\subsection{Todd and Todd${}^\#$}

\begin{lemma}[{\cite[Theorem 7.2]{Koblitz}, \cite[Proposition 10.2]{AHR}}]\label{ToddMoments}
    There is a $p$--adic moment sequence which in positions $n \ge 1$ is given by \[(1 - c^n) (1 - p^{n-1})\frac{-B_n}{n},\] where $B_n$ denotes the $n${\th} Bernoulli number.
\end{lemma}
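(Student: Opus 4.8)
The plan is to obtain the claimed sequence as the moments of the $p$--completed Todd orientation and to identify those moments by a short generating-function computation; this recovers \cite[Proposition 10.2]{AHR}. By \Cref{thm_todd_witten_Einfty} the orientation $\omega_{\mathrm{Td}}\co\MU\to\KU^\wedge_p$ is $\E_\infty$, and $\KU^\wedge_p$ is $K(1)$--local and trivially flat over itself, so \Cref{thm_ahr} applies: the moments $M_n^{\mathrm{Td}}$ of \Cref{defn_ahr_elements} extend to a $p$--adic moment sequence valued in $\pi_*\KU^\wedge_p=\Z_p[\beta^\pm]$. It then remains only to evaluate $M_n^{\mathrm{Td}}$ and compare with the advertised formula.

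First I would compute the nepers of $\omega_{\mathrm{Td}}$. From $\exp_{\mathrm{Td}}(x)=\beta^{-1}(1-e^{-\beta x})$ (\Cref{examp_todd_witten_exp}) one has $x/\exp_{\mathrm{Td}}(x)=\beta x/(1-e^{-\beta x})$, so after the substitution $w=\beta x$ the generating series of \Cref{defn_neps} becomes $\log\bigl(w/(1-e^{-w})\bigr)$. Differentiating and using $w/(e^w-1)=\sum_{n\ge0}\tfrac{B_n}{n!}w^n$ gives $\tfrac{d}{dw}\log\bigl(w/(1-e^{-w})\bigr)=\tfrac{1}{w}-\tfrac{1}{e^w-1}=-\sum_{n\ge1}\tfrac{B_n}{n!}w^{n-1}$. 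Integrating from $0$ yields $\log\bigl(w/(1-e^{-w})\bigr)=-\sum_{n\ge1}\tfrac{B_n}{n\cdot n!}w^n$, so comparing coefficients against \Cref{defn_neps} gives $\beta^{-n}N_n^{\mathrm{Td}}=-B_n/n$; both sides vanish for odd $n\ge3$.

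Next I would apply the logarithmic operator $\mathrm{id}-p^{-1}\psi^p$. Since the Adams operation $\psi^p$ acts on $\pi_{2n}\KU^\wedge_p$ by multiplying $\beta^n$ by $p^n$ (this is the content of $\psi^p(\beta)=p\beta$ in \Cref{examp_psi_data}), we obtain $(\mathrm{id}-p^{-1}\psi^p)(N_n^{\mathrm{Td}})=(1-p^{n-1})N_n^{\mathrm{Td}}$, and multiplying by $1-c^n$ as in \Cref{defn_ahr_elements} gives $M_n^{\mathrm{Td}}=(1-c^n)(1-p^{n-1})N_n^{\mathrm{Td}}$, whose degree--$0$ normalization is precisely $(1-c^n)(1-p^{n-1})\tfrac{-B_n}{n}$. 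Any convention-dependent ambiguity in the value of $B_1$ is harmless here because the prefactor $1-p^{n-1}$ vanishes at $n=1$.

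The one step demanding care---exactly the caveat noted in \Cref{defn_ahr_elements}---is that $\psi^p(\beta)=p\beta\ne\beta$, so one must apply $\mathrm{id}-p^{-1}\psi^p$ in internal degree $2n$ and divide by $\beta^n$ only afterward; performing the operator naively on degree--$0$ representatives would yield the spurious Euler factor $1-p^{-1}$ in place of $1-p^{n-1}$. Beyond this bookkeeping there is no real obstacle: the resulting sequence is the classical one whose integrality encodes the Kummer congruences for the Bernoulli numbers (Clausen--von Staudt together with Kummer, cf.\ \cite[Theorem 7.2]{Koblitz}), providing an independent check, and this calculation is meant only as a rehearsal before running the same procedure on the Weierstrass $\wp$--function.
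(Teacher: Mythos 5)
Your proposal is correct and follows essentially the same route as the paper: compute the Todd nepers from $\log\bigl(x/(1-e^{-x})\bigr)$ by differentiating the Bernoulli generating function to get $\beta^{-n}N_n^{\mathrm{Td}}=-B_n/n$, then apply \Cref{defn_ahr_elements} and \Cref{thm_ahr} via the $\E_\infty$--structure of \Cref{thm_todd_witten_Einfty}. You spell out two points the paper leaves implicit---the verification of the hypotheses of \Cref{thm_ahr} and the $\psi^p(\beta)=p\beta$ bookkeeping that produces the factor $1-p^{n-1}$ rather than $1-p^{-1}$---and both are handled correctly.
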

\begin{proof}
Using \Cref{examp_todd_witten_exp}, the nepers associated to the Todd orientation are
\[\sum_{n=0}^\infty N_n^\mathrm{Td} \frac{x^n}{n!} = \log\left( \frac{x}{s} \right) = \log \left( \frac{x}{1 - e^{-x}} \right).\]
The series expansion of the right-hand side follows from:
\[
\frac{d}{dx} \log \frac{x}{1 - e^{-x}} = \frac{1}{x} - \frac{e^{-x}}{1 - e^{-x}} = \frac{1}{x} - \frac{1}{x} \cdot \frac{x}{e^x - 1} = \sum_{n=1}^\infty -B_n \frac{x^{n-1}}{n!}.\]
We then calculate the moments (cf.\ \Cref{defn_ahr_elements}) and find that they are the claimed sequence, which is a $p$-adic moment sequence by \Cref{thm_ahr}.
\qedhere
\end{proof}

\begin{proof}[{Proof of \Cref{MainTheoremTodd}}]
Combining \Cref{examp_todd_witten_exp} and \Cref{defn_sharp}, the nepers associated to the Todd${}^\#$ orientation are \[\sum_{n=0}^\infty N_n^{\mathrm{Td}^\#} \frac{x^n}{n!} = \log\left(\frac{x}{s}\right) + \log\left(\frac{1 - e^{-x}(1 - t)}{t}\right).\]
The first summand is the subject of \Cref{ToddMoments}, so we focus on the second: \[\log \left( \frac{1 - t^{-1}(1 - e^x)}{e^x} \right) = -x - \sum_{n=1}^\infty\frac{x^n}{n!} \sum_{m=1}^n \frac{t^{-m}}{m} \sum_{j=0}^m (-1)^j \binom{m}{j} j^n.\]
The inner sum is exactly the iterated forward finite difference $\Delta^m[r^n](0)$.
Apply \Cref{thm_ahr} to produce the $p$-adic moment sequence:
\[M^{\mathrm{Td}^\#}_n = (1 - c^n)\left(\sum_{m=1}^n \frac{\Delta^m[r^n](0)}{m} \left( t^{-m} - p^{n-1} (1 - (1 - t)^p)^{-m} \right) \right).\]
The binomial raised to the $(-m)${\th} power does not naively expand, as $(1 - t)^p$ is not a topologically nilpotent element of $\pi_* (K^{t\T})_p^\wedge$.
However, by defining $\zeta$ such that $(t^{-1} - 1)^p = t^{-p} - 1 + p \zeta$, we find:
\[M^{\mathrm{Td}^\#}_n = (1 - c^n)\left(\sum_{m=1}^n \frac{\Delta^m[r^n](0)}{m} \left( (t^{-1})^m - p^{n-1} t^{-pm} (1 - p \zeta)^{-m} \right) \right),\]
which does define a convergent series in $\pi_* (K^{t\T})_p^\wedge$.
\end{proof}

\begin{remark}
The $t^{-m}$ term in $N_n^{\mathrm{Td}^\#}$ vanishes when $m > n$ for two reasons: the $m$--fold forward finite difference operator annihilates $r^n$, and the term $(1 - e^{-x})^m$ appearing in the expansion of $\log(1 - t^{-1}(1 - e^x))$ has $x$--degree at least $m$.
\end{remark}


\subsection{Witten and Witten${}^\#$}

\begin{lemma}[{\cite[Lemma 3.5.6]{Katz}, \cite[Propositions 10.9--10]{AHR}}]\label{WittenMoments}
    There is a $\Z\ps{q}^\wedge_p$--valued moment sequence which in even positions $2n \ge 1$ is given by
    \[\frac{(1 - c^{2n}) \cdot (2n-1)!}{(2 \pi i)^{2n}} \cdot \left(G_{2n}(q) - p^{2n-1} G_{2n}(q^p)\right)\]
    and which vanishes in odd positions $2n+1 \ge 1$.
\end{lemma}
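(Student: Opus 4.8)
The plan is to imitate the proof of \Cref{ToddMoments}: compute the Witten nepers in closed form, recognize them in terms of Eisenstein series, run them through \Cref{defn_ahr_elements} using the $\psi^p$--action of \Cref{examp_psi_data}, and then invoke \Cref{thm_todd_witten_Einfty} together with \Cref{thm_ahr} to upgrade the resulting list of moments to a genuine $p$--adic moment sequence valued in $\pi_*(\KU_\Tate)^\wedge_p = \Z[\beta^\pm]\ps{q}^\wedge_p$, which we then read off in degree $0$.

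For the nepers, I would place the product formula for $\exp_{\mathrm{Wit}}$ from \Cref{examp_todd_witten_exp} (written in this section's degree--$0$ variables $x = 2\pi i z$, $u = e^{-x}$) next to the product formula for $2\pi i\,\sigma_q(z)$ recorded in \Cref{SigmaFunction}. The two infinite products coincide term by term, so $\exp_{\mathrm{Wit}}(x) = 2\pi i\,\sigma_q(z)\cdot\exp\!\left(-\tfrac{x}{2} - \tfrac{1}{2}G_2(q)z^2\right)$. Taking logarithms and using $x/(2\pi i\,\sigma_q(z)) = z/\sigma_q(z)$ gives
\[\log\!\left(\frac{x}{\exp_{\mathrm{Wit}}(x)}\right) = \log\!\left(\frac{z}{\sigma_q(z)}\right) + \frac{x}{2} + \frac{G_2(q)z^2}{2}.\]
I would then substitute the $\sigma$--identity of \Cref{SigmaFunction}, which expands $\log(z/\sigma_q(z))$ as $\sum_{n\ge 3}\frac{x^n}{n!}\cdot\frac{(n-1)!}{(2\pi i)^n}G_n(q)$; its odd terms vanish because odd Eisenstein series telescope to $0$ (\Cref{EisensteinSeries}), and the stray summand $\tfrac12 G_2(q)z^2$ contributes exactly the $n=2$ term $\frac{x^2}{2!}\cdot\frac{1!}{(2\pi i)^2}G_2(q)$ that the expansion lacked. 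Reading coefficients against $\sum_n N_n^{\mathrm{Wit}}\frac{x^n}{n!}$ (where, per the $\beta$--normalization convention of \Cref{defn_neps}, $N_n^{\mathrm{Wit}}\in\pi_{2n}$ and the coefficients above are its degree--$0$ avatar $\beta^{-n}N_n^{\mathrm{Wit}}$) then yields $N_1^{\mathrm{Wit}} = \tfrac12\beta$, $N_{2k}^{\mathrm{Wit}} = \frac{(2k-1)!}{(2\pi i)^{2k}}G_{2k}(q)\,\beta^{2k}$ for every $k\ge 1$, and $N_{2k+1}^{\mathrm{Wit}} = 0$ for $k\ge 1$.

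Next I would apply \Cref{defn_ahr_elements}. The vanishing nepers $N_{2k+1}^{\mathrm{Wit}}$ ($k\ge 1$) give vanishing odd moments, and $M_1^{\mathrm{Wit}}$ vanishes as well: $N_1^{\mathrm{Wit}} = \tfrac12\beta$ is $q$--independent, so $\psi^p(N_1^{\mathrm{Wit}}) = p\,N_1^{\mathrm{Wit}}$ (using $\psi^p(\beta) = p\beta$ from \Cref{examp_psi_data}) and hence $\mathrm{id} - p^{-1}\psi^p$ annihilates it, exactly the mechanism that gave $M_1^{\mathrm{Td}} = 0$. On the even nepers, $\psi^p(q) = q^p$ together with $\psi^p(\beta) = p\beta$ gives $\psi^p(N_{2k}^{\mathrm{Wit}}) = p^{2k}\frac{(2k-1)!}{(2\pi i)^{2k}}G_{2k}(q^p)\beta^{2k}$, so
\[M_{2k}^{\mathrm{Wit}} = (1 - c^{2k})\left(\mathrm{id} - \tfrac{\psi^p}{p}\right)(N_{2k}^{\mathrm{Wit}}) = \frac{(1 - c^{2k})\,(2k-1)!}{(2\pi i)^{2k}}\bigl(G_{2k}(q) - p^{2k-1}G_{2k}(q^p)\bigr)\,\beta^{2k},\]
and $\beta^{-2k}M_{2k}^{\mathrm{Wit}}$ is the asserted formula. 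Finally $\omega_{\mathrm{Wit}}$ is an $\E_\infty$--orientation of the $K(1)$--local ring $(\KU_\Tate)^\wedge_p$, which is flat over $\KU^\wedge_p$ (\Cref{thm_todd_witten_Einfty}), so \Cref{thm_ahr} promotes $\{M_n^{\mathrm{Wit}}\}_{n\ge 1}$ to a $p$--adic moment sequence valued in $\pi_*(\KU_\Tate)^\wedge_p$; its degree--$0$ part is the $\Z\ps{q}^\wedge_p$--valued sequence claimed. (In particular the a priori non-$p$-integral constant term $-B_{2k}/(2k)$ of $\frac{(2k-1)!}{(2\pi i)^{2k}}G_{2k}(q)$ is forced to be cleared by the $q^p$--twist, which is the classical Kummer congruence — here obtained for free.)

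The step I expect to require the most care is the normalization bookkeeping in the neper computation: tracking the powers of $\beta$ suppressed by the degree--$0$ conventions, and in particular handling the quasimodular $G_2(q)$, which appears twice — as the Gaussian prefactor in the product formula for $\sigma_q$ and as the would-be $n=2$ term missing from the logarithmic $\sigma$--expansion. The content is that these two occurrences do not collide but combine, so that $N_{2k}^{\mathrm{Wit}} = \frac{(2k-1)!}{(2\pi i)^{2k}}G_{2k}(q)\,\beta^{2k}$ holds uniformly for all $k\ge 1$ and the final moment formula has no exceptional low-weight cases beyond the vanishing of $M_1^{\mathrm{Wit}}$.
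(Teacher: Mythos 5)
Your argument is correct and arrives at the same neper formula $N_{2k}^{\Wit} = \frac{(2k-1)!}{(2\pi i)^{2k}}G_{2k}(q)$, but by a genuinely different route than the paper. The paper's proof splits $\log(x/\exp_\Wit(x))$ into the Todd part plus the logarithm of the triple product, expands that logarithm directly into the double sum $\sum_{j,d}\frac{1}{d}q^{jd}(e^{dx}+e^{-dx}-2) = 2\sum_{n}\frac{x^{2n}}{(2n)!}\sum_j \sigma_{2n-1}(j)q^j$, and then matches the outcome (Bernoulli constant term supplied by \Cref{ToddMoments}, divisor sums supplied by the product) against the explicit $q$--expansion of $G_{2k}$ recorded in \Cref{EisensteinSeries}. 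You instead identify $\exp_\Wit(x)$ with $2\pi i\,\sigma_q(z)$ up to the Gaussian prefactor and invoke the generating-function identity $\log z - \log\sigma_q(z) = \sum_{n\ge 3}\frac{x^n}{n!}\cdot\frac{(n-1)!}{(2\pi i)^n}G_n(q)$ from \Cref{SigmaFunction}, with the quasimodular $G_2$ entering through the prefactor rather than through a divisor sum; this is precisely the maneuver the paper reserves for the proof of \Cref{MainTheoremWitten}, where the $\sigma$--presentation is unavoidable. Applied to the unsharped lemma your route is slicker and makes the uniformity in $k$ (including $k=1$) transparent, but it leans on the cited compatibility between the lattice-sum normalization of $G_n$ in the $\sigma$--identity and the $q$--expansion normalization appearing in the statement, whereas the paper's direct expansion re-derives that $q$--expansion and is self-contained. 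The remaining steps --- the vanishing of the odd moments and of $M_1^\Wit$ via $(\id - \psi^p/p)(\tfrac{1}{2}\beta)=0$, the action of $\psi^p$ on $G_{2k}(q)\,\beta^{2k}$, and the appeal to \Cref{thm_todd_witten_Einfty} and \Cref{thm_ahr} --- coincide with the paper's.
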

\begin{proof}
The exponential for the Witten orientation of Tate $K$--theory in \Cref{examp_todd_witten_exp} shows the associated sequence of nepers to be
\[\sum_{n=1}^\infty N_n^{\Wit} \frac{x^n}{n!} = \log \left( \frac{x}{1 - u}\right) + \log \left(\prod_{j=1}^\infty \frac{(1 - q^j)^2}{(1 - u q^j)(1 - u^{-1} q^j)} \right).\]
The first summand is felled by \Cref{ToddMoments}, so we concentrate on the second:
\begin{align*}
    \log \left(\prod_{j=1}^\infty \frac{(1 - q^j)^2}{(1 - u q^j)(1 - u^{-1} q^j)} \right)
    & = \sum_{j=1}^\infty \sum_{d=1}^\infty \frac{1}{d} (-2 q^{jd} + q^{jd} e^{-dx} + q^{jd} e^{dx}) \\
    & = 2 \sum_{n=1}^\infty \frac{x^{2n}}{(2n)!} \sum_{j=1}^\infty q^j \sigma_{2n-1}(j).
\end{align*}
Comparing with \Cref{EisensteinSeries} shows \[N_n^\Wit = \frac{(2n-1)!}{(2 \pi i)^{2n}} G_{2n}(q).\]
All that remains is to calculate the moments $M_n^\Wit$ and apply \Cref{thm_ahr}.
\end{proof}

\begin{proof}[Proof of \Cref{MainTheoremWitten}]
Again, combining \Cref{examp_todd_witten_exp} and \Cref{defn_sharp}, the sharped Witten nepers are given by \[\sum_{n=0}^\infty \frac{x^n}{n!} N_n^{\Wit^\#} = \log\left(\frac{x}{\exp_\Wit(x)}\right) + \log\left(\frac{\exp_\Wit(x + y)}{\exp_\Wit(y)}\right).\]
The first summand is felled by \Cref{WittenMoments}, so we focus on expanding the second.
Note from \Cref{SigmaFunction} that the Witten exponential can be written in terms of $\sigma_q$ as \[\exp_\Wit(x) = 2 \pi i \cdot \sigma_q(z) \cdot \exp\left(-\frac{1}{2} (2 \pi i \cdot z + G_2(q) \cdot z^2) \right),\]
and hence that second summand can be written as
\begin{align*}
    \log\left(\frac{\exp_\Wit(x + y)}{\exp_\Wit(y)}\right)
    & = \left(-\frac{1}{2} + \frac{G_2(q)}{(2 \pi i)^2} \log(1 - t)\right) \cdot x + \frac{-G_2(q)}{2 (2 \pi i)^2} \cdot x^2 \\
    & \quad \quad + \sum_{n=1}^\infty \frac{x^n}{n!} \cdot \frac{1}{(2 \pi i)^n} \cdot \wp_q^{(n-2)} \left(\frac{\log(1 - t)}{-2 \pi i}\right).
\end{align*}
The moments are then calculated to be exactly the expressions appearing in the Theorem statement, so \Cref{thm_ahr} finishes the proof.
\end{proof}

\section{Loose ends}

\begin{remark}
    The recipe followed here applies to any other $\E_\infty$--orientations the reader may have on hand.
    \begin{itemize}
        \item \cite{AHR}: 
        The spectrum $\mathrm{TMF}$ is a topological incarnation of the moduli stack of elliptic curves over a general base.
        Its homotopy, analogous to the cohomology of the moduli stack, brings to light certain torsion phenomena not visible in the homotopy of $\pi_* \KU_\Tate$.
        The $\sigma$--orientation of $\mathrm{tmf}$ refines the Witten orientation of $\KU_\Tate$ and is itself $\E_\infty$, suggesting one possible starting point for a generalization of our results.
        \item \cite{HillLawson}: 
        Topologists have also constructed variations on $\mathrm{TMF}$ for other congruence subgroups (though lifts of the $\sigma$--orientation have only been considered in limited situations).
        The number theory literature makes substantial use of these other stacks, and so an enrichment of our results to that setting would provide much better contact.
        \item \cite{LinYamashita}: 
        Topologists have recently constructed a spectrum $\mathrm{TJF}$ which refines the theory of Jacobi forms analogously to how $\mathrm{TMF}$ refines the moduli of elliptic curves.
        This perhaps forms a more appropriate target for parametric Jacobi orientations than $\mathrm{TMF}^{t\T}$.
        (We thank Yamashita for bringing this to our attention.)
        \item \cite[Appendix II]{MazurTate}: 
        Mazur and Tate construct functions analogous to the classical Weierstrass function, but over other adic rings and specified by quite different formulas.
        Since $\mathrm{TMF}$ lies over a general base, the $\sigma$--orientation suitably restricted may shed light on these other $\sigma$--functions.
        \item \cite{Balderrama}, \cite{nullstel}: 
        Morava $E$--theories are known to carry $\E_\infty$--orientations, and the relevant logarithms bear a number-theoretic description~\cite[1.10]{Rezk}.
        To our knowledge, the associated congruences have not been explored---nevermind their sharped versions.
    \end{itemize}
\end{remark}

\begin{remark}
    It is not clear to the authors how to unify the many derivatives appearing in this document.
    For example, it is tempting to interpret the finite differences appearing in \Cref{MainTheoremTodd} as belonging to a kind of pairing: \[\left\< \log \left( \frac{x}{\exp_{\Td^\#}(x)} \right), x^n t^{-m} \right\> = \frac{1}{n! m} \Delta^m[r^n](0),\] where the role of $t^{-m}$ is to select the iterated finite difference operator $\Delta^m|_{r=0}$ and $x^n$ to select the test polynomial $r^n$.
    This interpretation does not obviously extend to the positive $t$--terms appearing in $N_n^{\Wit^\#}$, but \emph{some} total interpretation seems surely possible.
\end{remark}

\begin{remark}[{\cite[Section 3]{Sofer}}]
    Sofer explores a construction which extracts from a Jacobi form a $p$--adic measure valued in modular forms, using formulas of similar flavor to ours but ``with one fewer variable''.
    It could be interesting to have some identification of the \emph{measure} in \Cref{MainTheoremWitten} in those terms, or to generalize their construction to produce measures valued in Jacobi forms, or even to see the limitations of any such generalizations.

    More concretely, it would be interesting to see topology-free proofs of the congruences described in our Main Theorems.
    One specific opportunity for improvement is that Ando, Hopkins, and Rezk's manual analysis allows them to ``halve'' the Mazur measure~\cite[Section 10.3]{AHR}.
    \Cref{ToddTableAt2} and \Cref{WittenTableAt2} suggest that this is possible for the measure constructed in \Cref{MainTheoremWitten}; perhaps this is accessible through topology, but it is likely equally accessible directly.
\end{remark}

\begin{remark}
    The three factors appearing in \Cref{ToddMoments} have discernable provenances in both homotopy theory and in number theory.
    In homotopy theory, they belong to the finite Adams resolution of the $K(1)$--local sphere, the Rezk logarithm, and the characteristic series of the Todd orientation.
    In number theory, they arise from the regularization of a distribution to a measure, the restriction to $\Z_p^\times$ to perform $p$--adic interpolation, and as special values of the $\zeta$--function.
    It would be deeply interesting to have a tighter correspondence between these three pieces.
    The Main Theorems also provide a new proving ground for any such analogy, which should additionally assign a measure-theoretic meaning to $\psi^p(t) = 1-(1-t)^p = [p]_{\widehat{\Gm}}(t)$.
\end{remark}

\begin{remark}
    In its incarnation as an operation on $\E_\infty$--rings (cf.\ \cite{CarmeliLuecke}), the sharp construction is part of a more general theory of $\E_\infty$--orientations of Tate objects.
    For example, this theory also subsumes the \emph{Frobenius homomorphism} of \cite{NikolausScholze}.
    One may wonder what the moment sequence associated to the Frobenius twist of the Witten orientation is.
\end{remark}

\bibliographystyle{halpha}
\bibliography{references}

\begin{figure}[p]
\resizebox{1.0\hsize}{!}{\input{todd.3.data.tex}}
\caption{%
A range of ternary expansions of the $3$--adic Todd${}^\#$ moments at $c = 1 + p$.
\Cref{MainTheoremTodd} with test polynomial $p^{-i} (r^{(p-1) p^i} - 1)$ shows that the $i${\th} digit within a column is periodic with periodicity $(p-1) p^i$. The colors are there to help the eye catch the vertical patterns.
}\label{ToddTableAt3}
\end{figure}

\begin{figure}[p]
\input{witten.3.data.tex}
\caption{%
Some $3$--adic Witten${}^\#$ moments at $c = 1 + p$, in a range of $t$-- and $q$--degrees.
Congruences here relate the colored digits across different tables rather than within columns.
The relevant test polynomials are \textcolor{orange}{$p^{-2}(r^{(p-1)p} - 1)$} and \textcolor{magenta}{$p^{-3}(r^{(p-1)p^2} - 1)$}.
}\label{WittenTableAt3}
\end{figure}

\begin{figure}[p]
\resizebox{1.0\hsize}{!}{\input{todd.2.data.tex}}
\caption{%
A range of binary expansions of the $2$--adic Todd${}^\#$ moments at $c = 1 + p$.
\Cref{MainTheoremTodd} with test polynomial $2^{-(i+2)} (r^{2^i} - 1)$ shows that the $0${\th}, $1$\textsuperscript{st}, and $2$\textsuperscript{nd} digit columns are $2$--periodic, and then after those the $i${\th} digit column is $2^{i-1}$--periodic.
}\label{ToddTableAt2}
\end{figure}

\begin{figure}[p]
\input{witten.2.data.tex}
\caption{%
Some $2$--adic Witten${}^\#$ moments at $c = 1 + p$, in a range of $t$-- and $q$--degrees.
Congruences here relate the colored digits across different tables rather than within columns.
Congruences here relate the colored digits across different tables rather than within columns.
The relevant test polynomials are \textcolor{orange}{$p^{-3}(r^2 - 1)$}, \textcolor{magenta}{$p^{-4}(r^4 - 1)$}, and \textcolor{cyan}{$p^{-5}(r^8 - 1)$}.
}\label{WittenTableAt2}
\end{figure}







\end{document}